\newtheorem{theorem}{Theorem}
\newtheorem{proposition}{Proposition}
\newtheorem{lemma}{Lemma}
\newtheorem{assumption}{Assumption}
\theoremstyle{definition}
\newtheorem{definition}{Definition}
\newcommand{\TP}{{d}}
\newcommand{\NF}{{n}}
\begin{document}

\title{Alleviating the Curse of Dimensionality in Minkowski Sum Approximations of Storage Flexibility}

\author{Emrah Öztürk \orcidlink{0000-0001-5063-531X}, Timm Faulwasser \orcidlink{0000-0002-6892-7406}, \IEEEmembership{Senior Member, IEEE}, Karl Worthmann \orcidlink{0000-0002-1450-2373}, Markus Preißinger \orcidlink{0000-0001-5701-9595}, \\ and Klaus Rheinberger \orcidlink{0000-0002-8277-4944}

\thanks{EÖ, KR and MP: The financial support by the Austrian Federal Ministry for Digital and Economic Affairs and the National Foundation for Research, Technology and Development and the Christian Doppler Research Association is gratefully acknowledged. The project was also funded by the federal state of Vorarlberg and the “European Regional Development Fund"}
\thanks{KW gratefully acknowledges funding by the Deutsche Forschungsgemeinschaft (DFG, German Research Foundation) – Project-ID 507037103}
\thanks{EÖ and KR are with the Research Center Energy, Vorarlberg University of Applied Sciences, 6850 Dornbirn, Austria (e-mail: emrah.oeztuerk@fhv.at; klaus.rheinberger@fhv.at).}
\thanks{TF is with the Institute of Energy Systems, Energy
Efficiency and Energy Economics, TU Dortmund University, 44227 Dortmund, Germany (e-mail: timm.faulwasser@ieee.org).}
\thanks{KW is with the Institute of Mathematics,
Technische Universität Ilmenau, 98693 Ilmenau, Germany (e-mail:
karl.worthmann@tu-ilmenau.de).}
\thanks{MP is with the Josef Ressel Centre for Intelligent Thermal Energy Systems, illwerke vkw Endowed Professorship for Energy Efficiency, Research Center Energy, Vorarlberg University of Applied Sciences, 6850 Dornbirn, Austria (e-mail: markus.preissinger@fhv.at).}}

\maketitle

\begin{abstract}
Many real-world applications require the joint optimization of a large number of flexible devices over time.
The flexibility of, e.g., multiple batteries, thermostatically controlled loads, or electric vehicles can be used to support grid operation and to reduce operation costs. 
Using piecewise constant power values, the flexibility of each device over $d$ time periods can be described as a polytopic subset in power space. 
The aggregated flexibility is given by the Minkowski sum of these polytopes. 
As the computation of Minkowski sums is in general demanding, several approximations have been proposed in the literature. 
Yet, their application potential is often objective-dependent and limited by the curse of dimensionality. 
We show that up to $2^d$ vertices of each polytope can be computed efficiently and that the convex hull of their sums provides a computationally efficient inner approximation of the Minkowski sum. 
Via an extensive simulation study, we illustrate that our approach outperforms ten state-of-the-art inner approximations in terms of computational complexity and accuracy for different objectives. 
Moreover, we propose an efficient disaggregation method applicable to any vertex-based approximation. 
The proposed methods provide an efficient means to aggregate and to disaggregate 
energy storages in quarter-hourly periods over an entire day with reasonable accuracy for aggregated cost and for peak power optimization.
\end{abstract}

\begin{IEEEkeywords}
distributed energy resources, energy storage, flexibility aggregation, Minkowski sum, vertex-based approximation, ancillary services, demand response, energy communities
\end{IEEEkeywords}

\section{Introduction}\label{sec:introduction}
\IEEEPARstart{T}{he} coordinated control of a large number of distributed flexible devices offers significant potential for power grids.
For example, the flexibility of shiftable loads in the distribution grid, such as batteries, refrigerators, heat pumps, water heaters, and air conditioners, can be used to support grid operations and to reduce operation costs. 
Eventually, for the sake of computational tractability, the large number of devices necessitates to cluster units and their flexibilities. 
To this end, the concept of an aggregator is introduced in the literature, cf.~\cite{b20}. 
The aggregator is typically an entity located between consumers, energy markets, and network operators. This entity manages contracted consumer devices, estimates the collective flexibility, and assigns power profiles to individual devices. 
The aggregator thus serves as an interface to a virtual power plant, see also~\cite{WortKell14}. 
The flexibility of each device can be described by a subset in the power space and the aggregated flexibility by the point-wise sum of these sets. 
However, the computation of this Minkowski sum is often prohibitive, cf.~\cite{b1}. 
Therefore, various tailored approximations have been proposed in the literature.

Existing approximations can be roughly divided into top-down and bottom-up approaches. 
The former typically use machine learning, Markov chains, etc. to directly approximate the aggregated flexibility, cf.~\cite{b2,b3,b4}. 
The latter start from individual flexibilities, they usually assume a certain underlying structure, and they can be further divided into inner and outer approximations. 
Outer approximations \cite{b5,b6,b7,b8,b9,b10}, compute supersets of the Minkowski sum and therefore they have the major drawback to potentially contain infeasible elements. 
Inner approximations make up the majority of Minkowski sum approximations in the literature \cite{b8,b9,b11,b12,b13,b7,b14,b5,b15,b16,b25,b26,b27}.
However, many of these have drawbacks, such as poorer optimization results compared to a setting without flexibility, high computational burden, and objective-dependent performance, cf.~\cite{b17}. 
Indeed the computational burden limits the application potential of several approaches significantly.
The objective-dependent performance is likely induced by the employed underlying set parametrizations, e.g., an ellipsoid inscribed in a polytope covers the interior rather than the vertices, resulting in poor performance in cost optimization and in good performance for peak reduction. 
An attempt to avoid the underlying structure is made in \cite{b18}, where a recursive algorithm is proposed to compute the vertices of a polytope by computing extreme bounds. 
Yet, this approach suffers from combinatorial complexity as it attempts to compute all vertices with a scheme that may lead to redundant computations. 
However, a related idea will also be used for the method proposed in the present paper. 
Further aggregation strategies, such as characterizing the flexibility of a fleet of heterogeneous storage units using the so-called E-p transform, can be found in \cite{b29,b30,b31}; strategies in the presence of nonlinearities with probabilistic inputs are discussed by \cite{b32,b33,b34}. 
There also exists a dynamic programming approach \cite{b35} and an exact aggregation strategy for a population of electric vehicles using permutahedra \cite{b36}.

Disaggregation represents the inverse operation to aggregation, i.e., the distribution of power profiles across individual flexible devices, cf.~\cite{b12,b5,b25}. 
Existing methods are often based on the solution of optimization problems which may induce a significant computational burden.

The novelty of the present paper is threefold.
First, we propose an efficient vertex-based inner approximation for typical energy storages that overcomes the weaknesses of existing approximations.
Second, the proposed approximation method is benchmarked against ten state-of-the-art inner approximation techniques from the literature. 
It is shown to outperform the other methods in terms of accuracy for various objectives and in terms of computational performance.
Finally, we propose an efficient disaggregation method that does not require optimization and that can be combined with any vertex-based approximation. 

The remainder of this paper is organized as follows: Definitions are given in Section \ref{sec:2}, where we define our approach for all polytopes satisfying two assumptions, discuss its properties, and give example polytopes for illustration purposes.
Section \ref{sec:3} discusses the general results related to our approach.
In Section \ref{sec:4} we propose an efficient algorithm to compute the approximation for energy storages with unrestricted final energy and extend it to the case of restricted final energy by applying corrections.
In Section \ref{sec:5}, we test our approximation against 10 state-of-the-art inner approximations in terms of accuracy for various objectives and computational complexity.
Section \ref{sec:6} is devoted to the novel disaggregation method that applies to all vertex-based approximations. Finally, conclusions are drawn in Section \ref{sec:7}.\\

\textbf{Notation}: The sets of natural and real numbers are denoted by $\mathbb{N}= \{1, 2, \ldots \}$ and $\mathbb{R}$, respectively. 
The Minkowski sum of sets $\mathcal{X}_i \subseteq \mathbb{R}^\TP$, $i \in \{1, \ldots, \NF\}$ is defined by $\mathcal{M} \coloneqq \{x \in \mathbb{R}^\TP : x = \sum_{i=1}^{\NF}x_i,\ x_i \in \mathcal{X}_i\}$.
For a matrix $A \in \mathbb{R}^{k \times \TP}$ and a vector $b \in \mathbb{R}^k$, the set $\mathcal{P}(A, b) \coloneqq \{x \in \mathbb{R}^\TP: Ax \leq b\}$ is a polyhedron, and a polytope if it is bounded.
The convex hull of a set $\mathcal{X}$ is written as $\text{Conv}(\mathcal{X})$.
The $\TP$-dimensional vector of zeros and ones are written as $\mathbf{0}_\TP$ and $\mathbf{1}_\TP$, respectively.
For $x \in \mathbb{R}^\TP$ and $t \leq \TP$, we use the notation $\textnormal{Proj}^t(x) \coloneqq (x_1, \ldots, x_t, \mathbf{0}_{\TP-t})^\top$ for the projection of~$x$ onto its first $t$~components.
The vector consisting of the first $t$ components of a vector $x \in \mathbb{R}^\TP$ is denoted by $x_{[t]} \in \mathbb{R}^t$. A matrix with constant diagonals descending from left to right is called a Toeplitz matrix. We say $v \in \mathbb{R}^\TP$ is a proper convex combination of $p, q \in \mathbb{R}^\TP$ if $v = tp + (1-t)q$, with $p \neq q$ and $t \in (0, 1)$. The cardinality of a set $\mathcal{X}$ is denoted by $\lvert \mathcal{X} \rvert$.

\section{Preliminaries}\label{sec:2}
In this section, we introduce our assumptions and give example polytopes to illustrate the imposed assumptions. Further, we define vectors of extreme actions within these polytopes.
We consider the following assumptions for $\mathcal{P}(A, b) \subset \mathbb{R}^d$.
\begin{assumption}[Required flexibility]
    If $\textnormal{Proj}^t(x) \in \mathcal{P}(A, b)$ for $t \in \{1,\ldots,\TP-1\}$, then there exists an $\varepsilon \in \mathbb{R} \setminus \{0\}$ such that $(x_1, \ldots, x_t, \varepsilon, \mathbf{0}_{\TP-(t+1)})^\top \in \mathcal{P}(A, b)$. Furthermore, there exists an $\varepsilon \in \mathbb{R} \setminus \{0\}$ such that $(\varepsilon, \mathbf{0}_{\TP-1})^\top \in \mathcal{P}(A, b)$.\label{assumption}
\end{assumption}
\begin{assumption}[Projection feasibility]\label{assumption2}
    If $x \in \mathcal{P}(A, b)$, then $\textnormal{Proj}^t(x) \in \mathcal{P}(A, b)$ for all $t \in \{1, \ldots, \TP-1\}$. Furthermore, $\mathbf{0}_{\TP} \in \mathcal{P}(A, b)$.
\end{assumption}
Assumption \ref{assumption} requires a minimum flexibility in each time period, and Assumption \ref{assumption2} requires the feasibility of all projections of $x$ if $x$ is feasible, cf. Fig.~\ref{fig:allowednotallowed}.
The inclusion of the zero vector models not using the flexibility.
\begin{figure}[tb]
	\centering
	\centerline{\includegraphics[width=\columnwidth]{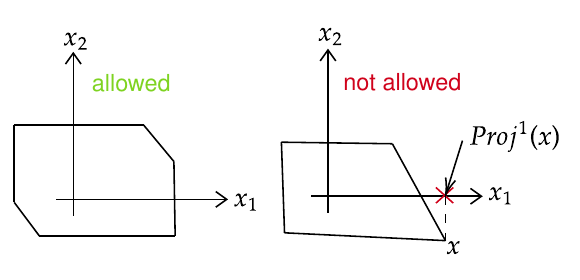}}
	\caption{Illustration of Assumption~\ref{assumption2}. While the polytope on the left satisfies the assumption, the polytope on the right does not.}
	\label{fig:allowednotallowed}
\end{figure}

In the following, we characterize polytopes that are typically used to model the flexibility of energy storages.
These polytopes are parameterized by the vector $p = (\alpha, \underline{x}, \overline{x}, \underline{S}, \overline{S}, \Delta t)^\top \in (0, 1] 
\times \mathbb{R}^4 \times (0,\infty)$ which denotes, respectively, the self-discharge factor, lower and upper bound on the charging rate (kW), minimum and maximum State of Charge (SoC; kWh), and time step (h). Furthermore, the number of time periods is denoted by $\TP \in \mathbb{N}$, initial SoC by $S_0 \in [\underline{S}, \overline{S}]$ (kWh), and minimum final SoC by $S_\text{f} \in [\underline{S}, \overline{S}]$ (kWh).
The set of feasible power profiles $x\in \mathbb{R}^{\TP}$ is given by the system dynamics
    \begin{subequations} 
	    \begin{align}
	        &\underline{x} \leq x(t) \leq \overline{x} \quad \forall \ t = 1, \ldots, d\\
	        &S(t) = \alpha S(t-1) + x(t)\Delta t \quad \forall \ t = 1, \ldots, d\\
	        &\underline{S} \leq S(t) \leq \overline{S} \quad \forall \ t = 1, \ldots, d-1\\
	        &S(0) = S_0\\
	        &S_f \leq S(d) \leq \overline{S}
	    \end{align}
	\end{subequations}
    and results in the polytope
    \begin{equation}
        \mathcal{B}(S_0, S_\text{f}, p) \coloneqq \{x\in \mathbb{R}^{\TP} : A(\alpha)x \leq b(S_0, S_\text{f}, p)\}
    \end{equation}
    with $A(\alpha) \in \mathbb{R}^{4\TP\times \TP}$ and $b(S_0, S_\text{f}, p) \in \mathbb{R}^{4\TP}$ defined by
        \begin{subequations}
    \begin{align}
		&A(\alpha) \coloneqq
		\left(-I , I , \Gamma^\top , -\Gamma^\top
		\right)^\top\;\text{and}\\
		&b(S_{0}, S_\text{f}, p) \coloneqq   
		\left(-\underline{x}\mathbf{1}_\TP^\top, \overline{x}\mathbf{1}_\TP^\top, \right. \nonumber \\
        &\left. \frac{(\overline{S}\mathbf{1}_\TP - S_0a_{\TP})^\top}{\Delta t}, \frac{(S_0a_{\TP-1} - \underline{S}\mathbf{1}_{d-1})^\top}{\Delta t}, \frac{\alpha^{\TP}S_{0} - S_\text{f}}{\Delta t}\right)^\top.
	\end{align}
\end{subequations}
Moreover, we have $a_\TP \coloneqq (\alpha, \alpha^2, \ldots, \alpha^\TP)^\top$, $I \in \mathbb{R}^{\TP \times \TP}$ is the identity matrix, and $\Gamma \in \mathbb{R}^{\TP \times \TP}$ is a Toeplitz matrix with first column and row defined by $(1, \alpha, \ldots, \alpha^{\TP-1})^\top$ and $(1, 0, \ldots, 0)$, respectively. 
Note that we use $x$ as flexibility variable following \cite{b5,b6,b11,b14,b17,b18} rather than the notation $u$ which is commonly used in systems and control. 
The polytopes $\mathcal{B}(S_0, S_\text{f}, p)$ model a variety of real-world flexibilities such as batteries and thermostatically controlled loads, cf.~\cite{b6,b7}. 
For example, if $\overline{x} > 0$, $\underline{x} < 0$, $\underline{S} < \overline{S}$, and $\alpha^d S_0 \geq \underline{S}$ then $\mathcal{B}(S_0, \underline{S}, p)$ satisfies Assumptions \ref{assumption} and~\ref{assumption2}. 
For alternative energy storage formulations, we refer to \cite{b28}. 

Our approach aims to compute certain vectors of extreme actions within the polytopes.
\begin{definition}[Extreme actions]\label{def:y}
    Let polytopes $\mathcal{P}(A_i, b_i) \subset \mathbb{R}^\TP$, $i \in \{1,\ldots,\NF\}$ satisfy the Assumptions~\ref{assumption} and~\ref{assumption2}.
    Then, for $j \in \{-1, 1\}^{\TP}$ the vectors $y_i^{j} \in \mathbb{R}^{\TP}$ defined by
    \begin{equation}\label{eq:yi0}
        y_{i,1}^{j} := j_1 \cdot \max \{ j_1 \cdot x \in \mathbb{R} : (x, \mathbf{0}_{\TP-1})^\top \in \mathcal{P}(A_i, b_i)\},
    \end{equation}
    and
    \begin{equation}\label{eq:yi}
	    y_{i, t}^{j} \coloneqq j_t \cdot \max \{ j_t \cdot x \in \mathbb{R} : (y^j_{i, [t-1]}, x, \mathbf{0}_{\TP-t})^\top \in \mathcal{P}(A_i, b_i) \}.
    \end{equation}
    for $t \in \{2,\ldots,\TP\}$ are called extreme actions.
\end{definition}
Note that $j_t = -1$ in \eqref{eq:yi0} and \eqref{eq:yi} is equivalent to replacing the maximization with a minimization.
Intuitively, the vectors $y^j_{i}$ are obtained by moving as far as possible in each axis in the negative direction if $j_t = -1$, and in the positive direction if $j_t = 1$, cf. Fig.~\ref{fig:exampel_algo}.
The vectors $y^j_i$ exist for all polytopes fulfilling the Assumptions \ref{assumption} and \ref{assumption2}, and it holds that $y^j_i \in \mathcal{P}(A_i, b_i)$ by construction.
The summation over all $i = 1, \ldots, \NF$ with fixed $j \in \{-1, 1\}^{\TP}$ is denoted by:
\begin{equation}\label{eq:y}
	v^j \coloneqq \sum_{i=1}^{\NF} y_{i}^{j},
\end{equation}
and the convex hull of the set of summed vectors leads to
\begin{equation}\label{eq:approx}
	\mathcal{A} \coloneqq \textnormal{Conv}(\{v^j : j \in \{-1, 1\}^\TP\}).
\end{equation}
The set $\mathcal{A}$ can be described as a deformed cuboid, cf. Fig.~\ref{fig:exampel_algo}.
It follows from \eqref{eq:approx} that $\mathcal{A}$ is a polytope and $\mathcal{A} \subseteq \mathcal{M}$.
\begin{figure}[tb]
    \centering
    \centerline{\includegraphics[width=1.0\columnwidth]{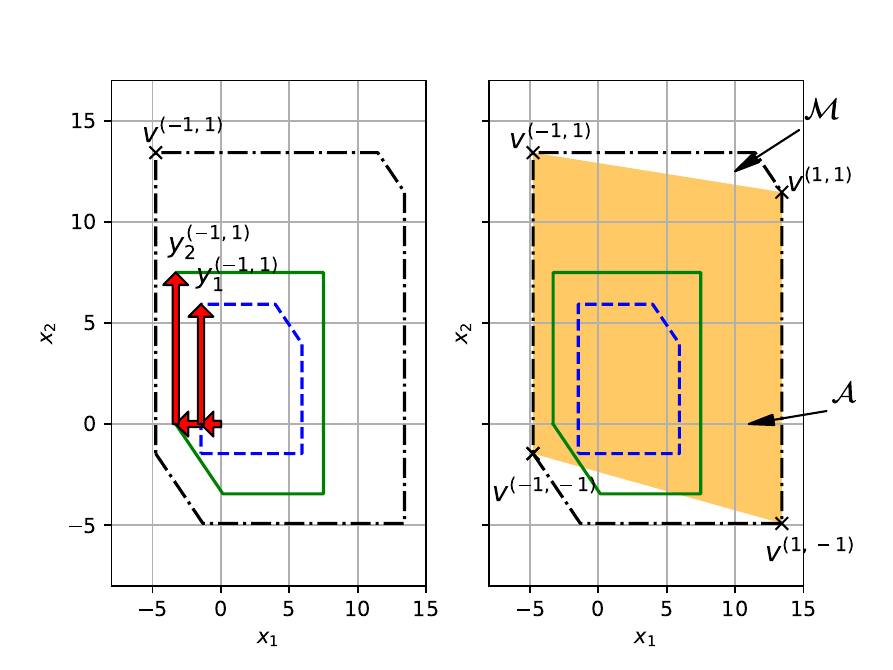}}
    \caption{Left: vectors $y^{(-1, 1)}_1$, $y^{(-1, 1)}_2$ within the polytopes shown in dashed blue and solid green, and the sum $v^{(-1, 1)}$ shown in the Minkowski sum $\mathcal{M}$ in dash-dotted black. Right: all possible vectors $v^j, j \in \{-1, 1\}^2$ in the Minkowski sum with the resulting set $\mathcal{A}$ in orange.}
    \label{fig:exampel_algo}
\end{figure}

Henceforth, we show that the summed extreme actions \eqref{eq:y} are distinct vertices of the Minkowski sum.
Thus, the convex hull $\mathcal{A}$ of the summed vectors is an inner approximation of the Minkowski sum $\mathcal{M}$.

\section{Main Results}\label{sec:3}

Next, we discuss the properties of the summed extreme actions.
Due to space limitations, standard definitions such as convex independence and vertex are not given; instead we refer to, e.g., \cite{b22,b23}.
Fig.~\ref{fig:exampel_algo} illustrates the setting analysed in the following technical results. The proofs of the lemmas and propositions are given in Appendix\ref{sec:appendixA} to Appendix\ref{sec:AppnedixD}.
\begin{lemma}\label{lemma:properties}
Let polytopes $\mathcal{P}(A_i, b_i) \subset \mathbb{R}^\TP$, $i \in \{1,\ldots,\NF\}$, fulfill Assumptions~\ref{assumption} and~\ref{assumption2}. Further, let $v^j, v^k \in \mathbb{R}^\TP, j,k \in \{-1, 1\}^\TP$ satisfy \eqref{eq:y}. Then, the following 
holds:
\begin{enumerate}
    \item $v^j_{t} \begin{cases}
            \geq 0 & \text{for $j_t = 1$} \\
            \leq 0 & \text{for $j_t = -1$}
        \end{cases}   \qquad\forall\,t \in \{1,\ldots,\TP\}$,
    \item if $j \neq k$, then $v^j \neq v^k$.
\end{enumerate}
\end{lemma}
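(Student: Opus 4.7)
The plan is to carry out a single induction on the coordinate index $t$ for each individual extreme action $y_i^j$ and then to sum the conclusions over $i$. The two assumptions will be used for complementary purposes: projection feasibility (Assumption~\ref{assumption2}) keeps $x=0$ in the one-dimensional feasible set of every maximization in~\eqref{eq:yi0}--\eqref{eq:yi}, while required flexibility (Assumption~\ref{assumption}) additionally places a nonzero point there.

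For part~(1), I would prove by induction on $t\in\{1,\ldots,\TP\}$ the auxiliary statement that $(y_{i,[t]}^j, \mathbf{0}_{\TP-t})^\top \in \mathcal{P}(A_i, b_i)$ and that $y_{i,t}^j \geq 0$ if $j_t = 1$ whereas $y_{i,t}^j \leq 0$ if $j_t = -1$. The base case $t=1$ uses $\mathbf{0}_{\TP}\in \mathcal{P}(A_i,b_i)$, so $x=0$ is feasible in~\eqref{eq:yi0} and the maximum of $j_1 x$ is at least~$0$; multiplying by $j_1$ gives the sign claim. The inductive step is identical, using projection feasibility to conclude $(y_{i,[t-1]}^j, 0, \mathbf{0}_{\TP-t})^\top \in \mathcal{P}(A_i, b_i)$ before invoking~\eqref{eq:yi}. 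Summing over $i$ preserves the sign pattern, yielding the sign of $v_t^j$.

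For part~(2), let $\tau \coloneqq \min\{t : j_t \neq k_t\}$ and, without loss of generality, $j_\tau = 1$ and $k_\tau = -1$. Because the recursion~\eqref{eq:yi} for coordinate~$t$ depends only on $j_1,\ldots,j_t$, a short induction shows $y_{i,[\tau-1]}^j = y_{i,[\tau-1]}^k$ for every~$i$. Hence $y_{i,\tau}^j$ and $y_{i,\tau}^k$ are respectively the maximum and the minimum of the same one-dimensional slice
\[
I_i \coloneqq \{x \in \mathbb{R} : (y_{i,[\tau-1]}^j, x, \mathbf{0}_{\TP-\tau})^\top \in \mathcal{P}(A_i, b_i)\},
\]
which is a closed interval by convexity of $\mathcal{P}(A_i,b_i)$. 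Assumption~\ref{assumption} (its second clause if $\tau = 1$, otherwise the first clause at $t = \tau-1$) supplies some $\varepsilon_i \neq 0$ in $I_i$, while Assumption~\ref{assumption2} supplies $0 \in I_i$. Consequently $I_i$ contains at least two distinct points, so $y_{i,\tau}^j > y_{i,\tau}^k$ for \emph{every}~$i$. Summation over $i$ yields $v_\tau^j > v_\tau^k$, whence $v^j \neq v^k$.

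The only non-routine point is the identity $y_{i,[\tau-1]}^j = y_{i,[\tau-1]}^k$: without it the two extreme actions would optimize over different slices and no quantitative comparison would be available. Once this is in place, Assumption~\ref{assumption} pins down the coordinate at position~$\tau$ componentwise, and the remaining arguments reduce to sign checks.
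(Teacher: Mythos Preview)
Your argument is correct. Part~(1) is essentially the paper's proof: both of you observe that Assumption~\ref{assumption2} puts $0$ into the one-dimensional feasible set of each optimization in~\eqref{eq:yi0}--\eqref{eq:yi}, which forces the sign of every $y^j_{i,t}$ and hence of $v^j_t$.

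Part~(2) is where your proof differs from the paper, and your version is actually the more careful one. The paper fixes an \emph{arbitrary} index~$t$ with $j_t\neq k_t$ and argues toward $v^j_t\neq v^k_t$; its final step compares $\max$ over the $y^j_{i,[t-1]}$-slice with $\min$ over the $y^k_{i,[t-1]}$-slice and tacitly treats these as the same interval. In general they are not, and the strong statement ``$v^j_t\neq v^k_t$ whenever $j_t\neq k_t$'' can fail (e.g.\ for $n=1$, $d=2$ and the parallelogram $\mathrm{Conv}\{(1,0),(1,-1),(-1,0),(-1,1)\}$, which satisfies both assumptions, one gets $y^{(1,1)}=(1,0)$ and $y^{(-1,-1)}=(-1,0)$, so the second coordinates coincide). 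Your choice of $\tau=\min\{t:j_t\neq k_t\}$ is exactly the device that makes the two slices coincide, because $y^j_{i,[\tau-1]}=y^k_{i,[\tau-1]}$; then $y^j_{i,\tau}$ and $y^k_{i,\tau}$ really are the max and min of the \emph{same} nondegenerate interval, giving the strict inequality $y^j_{i,\tau}>y^k_{i,\tau}$ for every~$i$. This is the right way to complete the proof, and it is good that you singled it out as the ``only non-routine point.''
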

\begin{lemma}\label{lemma_not_in_M_sum}
	Let polytopes $\mathcal{P}(A_i, b_i) \subset \mathbb{R}^\TP$, $i \in \{1, \ldots, \NF\}$ with Minkowski sum $\mathcal{M}$ fulfill the Assumptions \ref{assumption} and \ref{assumption2}, and $p\in \mathbb{R}^\TP$. Further, let $v^j \in \mathbb{R}^\TP, j \in \{-1, 1\}^\TP$ satisfy \eqref{eq:y}. For $t \in \{2, \ldots, \TP\}$, if $p_{[t-1]} = v^j_{[t-1]}$ and $p_{t} > v^j_t$ with $j_t=1$, or $p_{t} < v^j_t$ with $j_t = -1$, then $p \not\in \mathcal{M}$.
    Furthermore, if $p_1 > v^j_1$ with $j_1 = 1$ or $p_1 < v^j_1$ with $j_1 = -1$, then $p \notin \mathcal{M}$.
\end{lemma}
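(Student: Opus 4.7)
The plan is to argue by contradiction: assume $p \in \mathcal{M}$, so there exist $x_i \in \mathcal{P}(A_i, b_i)$ with $p = \sum_{i=1}^{\NF} x_i$, and derive a bound on $p_t$ that conflicts with the hypothesis. By Assumption~\ref{assumption2}, we may replace each $x_i$ with $\textnormal{Proj}^t(x_i)$, since projecting each summand preserves feasibility and yields $\textnormal{Proj}^t(p)$, whose first $t$ components match those of $p$. This removes the irrelevant tail from the discussion.

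For the base case, which also handles the last sentence of the lemma, Assumption~\ref{assumption2} gives $(x_{i,1}, \mathbf{0}_{\TP-1})^\top \in \mathcal{P}(A_i, b_i)$, so the defining property of $y_{i,1}^j$ in \eqref{eq:yi0} implies the one-sided bound $j_1 x_{i,1} \leq j_1 y_{i,1}^j$. Summing over $i$ yields $j_1 p_1 \leq j_1 v_1^j$, which contradicts $j_1 p_1 > j_1 v_1^j$.

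For $t \geq 2$, the key step is an induction on $s \in \{1, \ldots, t-1\}$ establishing that $p_s = v_s^j$ forces the componentwise equality $x_{i,s} = y_{i,s}^j$ for every $i$. Assuming $x_{i,r} = y_{i,r}^j$ for all $r < s$, Assumption~\ref{assumption2} again gives $\textnormal{Proj}^s(x_i) = (y_{i,[s-1]}^j, x_{i,s}, \mathbf{0}_{\TP-s})^\top \in \mathcal{P}(A_i, b_i)$, so by the extremality in \eqref{eq:yi} we have $j_s x_{i,s} \leq j_s y_{i,s}^j$. Summing and using $p_s = v_s^j$ turns every one of these inequalities into an equality, completing the induction.

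Applying the same extremality argument once more at $s = t$, but with the hypothesis $p_t > v_t^j$ (or $p_t < v_t^j$) replacing the equality, yields $j_t p_t \leq j_t v_t^j$, which contradicts the strict inequality in the assumption. The main obstacle is the rigidity step in the middle paragraph: it must be made clear that although we only a priori know that the coordinate sums agree, the uniform one-sided bounds $j_s x_{i,s} \leq j_s y_{i,s}^j$ rule out any compensation between summands, so equality of sums propagates to equality of each summand. Once this is in place, the conclusion at step $t$ is immediate.
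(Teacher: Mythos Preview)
Your proof is correct and rests on the same two ingredients as the paper's: Assumption~\ref{assumption2} to project each summand $x_i$ down to its first $s$ coordinates while staying feasible, and the defining extremality of $y^j_{i,s}$ in \eqref{eq:yi0}--\eqref{eq:yi} to obtain the one-sided bounds $j_s x_{i,s} \le j_s y^j_{i,s}$.

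The difference is organizational. The paper argues by a case split: either $p_{i,[t-1]} = y^j_{i,[t-1]}$ for all $i$ (Case~1), in which case the contradiction at coordinate $t$ is immediate, or not (Case~2), in which case one locates the smallest index $m$ where some $p_{i,m}$ differs from $y^j_{i,m}$ and uses the equal-sum constraint to find two indices straddling $y^j_{\cdot,m}$, one of which violates extremality. Your induction on $s$ collapses this case split: the one-sided bounds together with $p_s = v^j_s$ force $x_{i,s} = y^j_{i,s}$ for every $i$ at each step, so Case~2 simply cannot occur and you land directly in Case~1. This is a bit cleaner, since it makes the ``no compensation between summands'' observation explicit and avoids the minimum-index bookkeeping; the paper's version, on the other hand, makes the failure mode in Case~2 more visible. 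Both arguments are equally valid and of comparable length.
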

Lemma \ref{lemma_not_in_M_sum} states that there can be no vector in $\mathcal{M}$ that has $t-1$ coordinates equal to $v^j$ and a value greater than $v^j_t$ in the $t$-th coordinate if $j_t = 1$.
Similarly, there cannot be a vector with equal $t-1$ coordinates in $\mathcal{M}$ that has a value less than $v^j_t$ when $j_t = -1$.
This characteristic behavior is also illustrated in Fig.~\ref{fig:exampel_algo}.

\begin{proposition}\label{lemma1}
    Let polytopes $\mathcal{P}(A_i, b_i) \subset \mathbb{R}^\TP$, $i \in \{1, \ldots, \NF\}$, fulfill the Assumptions \ref{assumption} and \ref{assumption2}. Further, let $v^j \in \mathbb{R}^\TP, j\in \{-1, 1\}^\TP$ satisfy \eqref{eq:y}, and $\mathcal{A}$ satisfy \eqref{eq:approx}. Then, $v^j$ is a vertex of~$\mathcal{A}$.
\end{proposition}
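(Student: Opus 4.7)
The plan is to argue by contradiction: assume $v^j$ is not a vertex of $\mathcal{A}$, and derive a contradiction by combining Lemma \ref{lemma_not_in_M_sum} with Lemma \ref{lemma:properties}(2). Since $\mathcal{A}$ is the convex hull of the finite set $\{v^k : k \in \{-1,1\}^\TP\}$, a standard reduction shows that it suffices to rule out the existence of nonnegative weights $(\lambda_k)_{k \neq j}$ with $\sum_{k \neq j} \lambda_k = 1$ satisfying $v^j = \sum_{k \neq j} \lambda_k v^k$; any geometric witness $v^j = t p + (1-t) q$ with $p \neq q$ in $\mathcal{A}$ and $t \in (0,1)$ either collapses to $p = q = v^j$ (when only the coefficient of $v^j$ is positive in both generator expansions of $p$ and $q$) or reduces, after expanding $p,q$ in the generators, to such a representation.

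The heart of the proof is an induction on $t \in \{1,\ldots,\TP\}$ showing that $v^k_{[t]} = v^j_{[t]}$ for every $k$ in the support $K$ of $(\lambda_k)$. For the base case, take without loss of generality $j_1 = 1$. The second statement of Lemma \ref{lemma_not_in_M_sum}, applied to each $v^k \in \mathcal{M}$, yields $v^k_1 \leq v^j_1$; reading off the first coordinate of the convex decomposition then gives $v^j_1 = \sum_{k \in K} \lambda_k v^k_1 \leq v^j_1$, so equality must hold termwise and $v^k_1 = v^j_1$ throughout $K$. For the inductive step at $t \geq 2$, the hypothesis $v^k_{[t-1]} = v^j_{[t-1]}$ is precisely what the first statement of Lemma \ref{lemma_not_in_M_sum} requires: with $j_t = 1$ (the case $j_t = -1$ being symmetric by reversing inequalities), it delivers $v^k_t \leq v^j_t$, and the same averaging argument in the $t$-th coordinate of the convex combination forces $v^k_t = v^j_t$ for all $k \in K$.

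Once the induction terminates at $t = \TP$, every $v^k$ with $k \in K$ coincides with $v^j$. But Lemma \ref{lemma:properties}(2) guarantees $v^k \neq v^j$ whenever $k \neq j$, contradicting $K \neq \emptyset$. I anticipate the only real subtlety to be the initial reduction from the geometric notion of a vertex to the combinatorial statement about convex combinations of the generators of $\mathcal{A}$; once that bookkeeping is in place, the coordinate-by-coordinate core of the argument is almost mechanical, since the two preceding lemmas have been tailored exactly to enable this lexicographic-style collapse.
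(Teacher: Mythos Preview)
Your argument is correct, but it takes a genuinely different route from the paper's proof. The paper establishes Proposition~\ref{lemma1} by induction on the ambient dimension~$\TP$: it shows that the set $\{v^j : j \in \{-1,1\}^\TP\}$ is convex independent, using only the distinctness statement of Lemma~\ref{lemma:properties}(2). The inductive step assumes a convex-dependence relation among the $(\TP+1)$-dimensional vectors $v^{(j,\pm 1)}$, projects it to the first $\TP$ coordinates (where pairs $v^{(j,-1)},v^{(j,1)}$ collapse to $v^j$), and then either contradicts the $\TP$-dimensional hypothesis or forces $v^{(k,-1)}=v^{(k,1)}$. In particular, the paper's proof of Proposition~\ref{lemma1} does \emph{not} invoke Lemma~\ref{lemma_not_in_M_sum} at all.

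Your approach instead runs a coordinate-wise induction on $t$ and leans on Lemma~\ref{lemma_not_in_M_sum}, which is a statement about $\mathcal{M}$ rather than $\mathcal{A}$. This is perfectly legitimate since Lemma~\ref{lemma_not_in_M_sum} precedes Proposition~\ref{lemma1} in the paper, and the generators $v^k$ lie in $\mathcal{A}\subseteq\mathcal{M}$. The trade-off is this: the paper's proof is more intrinsic to $\mathcal{A}$ and uses the weakest possible ingredient (distinctness alone), whereas your argument imports the stronger geometric information encoded in Lemma~\ref{lemma_not_in_M_sum}. On the other hand, your lexicographic collapse is essentially the same mechanism the paper later deploys in the proof of Proposition~\ref{lemma2}, so your route effectively anticipates and partially subsumes that step; if one is heading toward Theorem~\ref{theorem:Vertices of M-sum} anyway, your approach is arguably the more economical path.
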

The proposition states that $v^j$ is a vertex of $\mathcal{A}$, and by Lemma~\ref{lemma:properties}, the elements of $\{v^j : j \in \{-1, 1\}^\TP\}$ are distinct.
Thus, they are distinct vertices of $\mathcal{A}$.

\begin{proposition}\label{lemma2}
	Let polytopes $\mathcal{P}(A_i, b_i) \subset \mathbb{R}^\TP$, $i \in \{1, \ldots, \NF\}$, with Minkowski sum $\mathcal{M}$ fulfill the Assumptions \ref{assumption} and \ref{assumption2}. Further, let $v^j \in \mathbb{R}^\TP, j \in \{-1, 1\}^\TP$ satisfy \eqref{eq:y}, $\mathcal{A}$ satisfy \eqref{eq:approx}, and $p, q \in \mathcal{M}$ with $v^j= t p+(1- t)q, t \in (0, 1)$, then, $p, q \in \mathcal{A}$.
\end{proposition}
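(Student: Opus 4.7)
The plan is to show coordinate-by-coordinate that $p=q=v^j$, which immediately yields $p,q\in\mathcal{A}$ since $v^j$ is one of the points whose convex hull defines $\mathcal{A}$. In other words, the proposition says that $v^j$ is not only a vertex of $\mathcal{A}$ (Proposition~\ref{lemma1}) but also an extreme point of $\mathcal{M}$, and both conclusions are really one and the same fact once Lemma~\ref{lemma_not_in_M_sum} is in hand.

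First, I would handle the initial coordinate. Since $p,q\in\mathcal{M}$, the ``Furthermore'' clause of Lemma~\ref{lemma_not_in_M_sum} gives $p_1\leq v^j_1$ and $q_1\leq v^j_1$ in the case $j_1=1$, with the inequalities reversed in the case $j_1=-1$. Substituting into $v^j_1=tp_1+(1-t)q_1$ with $t\in(0,1)$, a proper convex combination of two numbers can equal the common extremum bound only if both summands already equal that bound. Hence $p_1=q_1=v^j_1$ in either sign case.

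Next, I would proceed by induction on $t\in\{2,\ldots,\TP\}$. Assume $p_{[t-1]}=q_{[t-1]}=v^j_{[t-1]}$. Then the main clause of Lemma~\ref{lemma_not_in_M_sum} applies to both $p$ and $q$, producing $p_t\leq v^j_t$ and $q_t\leq v^j_t$ when $j_t=1$ (and the reverse when $j_t=-1$). The identity $v^j_t=tp_t+(1-t)q_t$ again forces $p_t=q_t=v^j_t$. After $\TP$ iterations, $p=q=v^j$, and since $v^j\in\mathcal{A}$ by \eqref{eq:approx}, the conclusion $p,q\in\mathcal{A}$ follows.

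The main obstacle here is essentially bookkeeping: one has to keep track of the sign of $j_t$ in each coordinate and invoke the appropriate half of Lemma~\ref{lemma_not_in_M_sum}. Both cases are entirely symmetric and are explicitly covered by that lemma, so the proposition reduces to a clean induction with no further machinery required; the substantive work was already performed in Lemmas~\ref{lemma:properties} and~\ref{lemma_not_in_M_sum}.
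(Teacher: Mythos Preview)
Your proof is correct and in fact establishes more than the proposition asks: you show $p=q=v^j$, i.e., that $v^j$ is already an extreme point of $\mathcal{M}$. The paper takes a different route. Rather than forcing equality coordinate by coordinate, it argues by case distinction on whether $p$ and $q$ lie in $\mathcal{A}$ or in $\mathcal{M}\setminus\mathcal{A}$, looks at the \emph{first index where $p$ and $q$ differ}, and invokes Lemma~\ref{lemma_not_in_M_sum} once at that index to obtain a contradiction whenever at least one of the two points lies outside $\mathcal{A}$. Both arguments rest on the same lemma, but your inductive version is cleaner: it avoids the case split over $\mathcal{A}$ versus $\mathcal{M}\setminus\mathcal{A}$, and it yields Theorem~\ref{theorem:Vertices of M-sum} as an immediate by-product, whereas the paper needs to combine Proposition~\ref{lemma1} with Proposition~\ref{lemma2} in a separate step to reach that theorem. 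The paper's formulation, on the other hand, keeps Proposition~\ref{lemma2} at exactly the strength needed for its modular proof architecture.
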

The proposition gives that if $v^j$ is a proper convex combination of elements $p, q \in \mathcal{M}$, then $p, q$ must be in $\mathcal{A}$, see Appendix\ref{sec:AppnedixD} for the proof.
We can now state our first main result which shows that the readily computable $2^d$ vectors $v^j$ are indeed vertices of the Minkowski sum, and, thus, their convex hull constitutes  an inner approximation.

\begin{theorem}[Extreme actions define vertices]\label{theorem:Vertices of M-sum}
	Let polytopes $\mathcal{P}(A_i, b_i) \subset \mathbb{R}^\TP$, $i \in \{1, \ldots, \NF\}$, with Minkowski sum $\mathcal{M}$ fulfill the Assumptions \ref{assumption} and \ref{assumption2}.
    Then, any $v^j \in \mathbb{R}^\TP, j \in \{-1,1\}^\TP$, satisfying \eqref{eq:y} is a vertex of $\mathcal{M}$.
\end{theorem}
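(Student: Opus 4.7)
The plan is to argue by contradiction, combining Propositions~\ref{lemma1} and~\ref{lemma2} directly. The heavy lifting has already been done in those two results: Proposition~\ref{lemma1} identifies $v^j$ as a vertex of the inner approximation $\mathcal{A}$, while Proposition~\ref{lemma2} rules out the possibility of writing $v^j$ as a proper convex combination of points that escape $\mathcal{A}$. So the only remaining task is to stitch these two facts together.

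First, I would note that $v^j \in \mathcal{A} \subseteq \mathcal{M}$ by construction of $\mathcal{A}$ in \eqref{eq:approx} and the basic inclusion $\mathcal{A} \subseteq \mathcal{M}$ already stated after \eqref{eq:approx}; hence $v^j$ is a candidate vertex of $\mathcal{M}$. Then I would suppose toward a contradiction that $v^j$ is \emph{not} a vertex of $\mathcal{M}$. By the standard characterization of vertices (equivalently, extreme points) of a polytope, there exist $p, q \in \mathcal{M}$ with $p \neq q$ and $t \in (0,1)$ such that $v^j = t p + (1-t) q$.

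Now Proposition~\ref{lemma2} applies verbatim to this decomposition and forces $p, q \in \mathcal{A}$. But then $v^j$ is expressed as a proper convex combination of two distinct points of $\mathcal{A}$, which contradicts Proposition~\ref{lemma1} asserting that $v^j$ is a vertex of $\mathcal{A}$. This contradiction shows $v^j$ is indeed a vertex of $\mathcal{M}$, completing the proof.

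There is no real obstacle to anticipate here, since both of the needed ingredients have been established in the two preceding propositions; the only care is to use the correct definition of vertex (no proper convex combination of two distinct points of the set) so that the appeal to Proposition~\ref{lemma2} and Proposition~\ref{lemma1} lines up cleanly. If one wanted to be extra explicit, one could also remark that by Lemma~\ref{lemma:properties} the $2^\TP$ vectors $v^j$ are pairwise distinct, so that Theorem~\ref{theorem:Vertices of M-sum} really yields $2^\TP$ distinct vertices of $\mathcal{M}$, which is the quantitative statement advertised in the abstract and introduction.
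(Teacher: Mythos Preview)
Your proposal is correct and is essentially identical to the paper's proof: both argue by contradiction, invoke Proposition~\ref{lemma2} to force $p,q\in\mathcal{A}$, and then contradict Proposition~\ref{lemma1}. The additional remarks you make about $v^j\in\mathcal{A}\subseteq\mathcal{M}$ and about distinctness via Lemma~\ref{lemma:properties} are also exactly in line with what the paper states around the theorem.
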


\begin{proof}
    Suppose that $v^j$ is not a vertex of $\mathcal{M}$, then $v^j = tp + (1-t)q$ with $p, q \in \mathcal{M}$, $p \neq q$ and $t \in (0, 1)$.
    From Proposition~\ref{lemma2} it follows that $p, q \in \mathcal{A}$, which gives $v^j$ as a proper convex combination of elements in $\mathcal{A}$.
    Thus $v^j$ cannot be a vertex of $\mathcal{A}$, which contradicts Proposition~\ref{lemma1}.
    Hence, the assumption that $v^j$ is not a vertex of $\mathcal{M}$ must be false.
\end{proof}
Theorem \ref{theorem:Vertices of M-sum} combined with Lemma \ref{lemma:properties} states that the sums of extreme actions are distinct vertices of $\mathcal{M}$, providing a novel method to compute a subset of Minkowski sum vertices.
Note that $\mathcal{A}$ is exact for cuboids since they have $2^\TP$ vertices in $\TP$-dimensional space and $\lvert \{v^j : j \in \{-1, 1\}^\TP\}\rvert = 2^\TP$.
Moreover, for any set $\mathcal{V} \subseteq \{v^j : j \in \{-1, 1\}^\TP\}$ it holds that $\textnormal{Conv}(\mathcal{V}) \subseteq \mathcal{M}$, thus $\textnormal{Conv}(\mathcal{V})$ is an inner approximation of $\mathcal{M}$.

\section{Application to Energy Storage}\label{sec:4}
Next, we present an efficient algorithm for computing the extreme actions $y^j, j \in \{-1, 1\}^\TP$ for energy storages $\mathcal{B}(S_0, \underline{S}, p)$.
This approach is then further extended by a corrective algorithm to compute a subset of vertices for the corresponding energy storages $\mathcal{B}(S_0, S_f, p)$. 
Note that this is necessary because $\mathcal{B}(S_0, S_f, p)$ may violate Assumption~\ref{assumption2} for arbitrary $S_f > \underline{S}$.
Finally, the complete algorithm for polytopes $\mathcal{B}(S_{0, i}, S_{f, i}, p_i), i = 1, \ldots, \NF$ is presented. 

Algorithm \ref{algo:batt} computes the $y^j$ for given parameters $S_0, p$, $j \in \{-1, 1\}^\TP$, and $S_f = \underline{S}$ without invoking any numerical optimization problems.
The procedure iterates through the components $j_t$ of $j$. If $j_t = 1$, then $y^j_t$ is determined by charging to the limit without violating the constraints, and by discharging to the limit for $j_t = - 1$.
To this end, Line 5 checks whether the upper energy constraint for $y^j_t = \overline{x}$ is violated.
If so, $y^j_t$ in Line 6 is chosen to fully charge the battery. 
Similarly, Line 10 checks whether the lower energy constraint is violated with $y^j_t = \underline{x}$.
If so, $y^j_t$ in Line 11 is chosen to fully discharge the battery.
The application of Algorithm \ref{algo:batt} to all $j \in \{-1, 1\}^\TP$ yields the set of vectors $\{y^j : j \in \{-1, 1\}^\TP\}$.
The aggregated vectors are then obtained by storing these vectors in matrices $V_i$ and further calculating $\sum_{i=1}^\NF V_i$. \vspace*{1mm}
\begin{algorithm}[ht]
	\caption{(Vertex)}\label{algo:batt}
    \textbf{Input} $S_{0}, p, j \in \{-1, 1\}^\TP$
	\begin{algorithmic}[1]
        \State $y^j \gets \mathbf{0}_\TP$
        \For{$t = 1$ \textbf{to} $\TP$}
            \If{$j_t = 1$}
                \State  $y^j_t \gets \overline{x}$
                \If{$\alpha^t S_0 + \sum_{\tau = 1}^{t}\alpha^{t-\tau}y^j_\tau\Delta t > \overline{S}$}
                    \State $y^j_t \gets \frac{\overline{S} - (\alpha^t S_0 + \sum_{\tau = 1}^{t-1}\alpha^{t-\tau}y^j_\tau \Delta t)}{\Delta t}$
                \EndIf
            \ElsIf{$j_t = -1$}
                \State $y^j_t \gets \underline{x}$
                \If{$\alpha^t S_0 + \sum_{\tau = 1}^{t}\alpha^{t-\tau}y^j_\tau\Delta t < \underline{S}$}
                    \State $y^j_t \gets \frac{\underline{S}-(\alpha^t S_0 + \sum_{\tau = 1}^{t-1}\alpha^{t-\tau}y^j_\tau \Delta t)}{\Delta t}$
                \EndIf
            \EndIf
        \EndFor
    \end{algorithmic}
    \textbf{Output} $y^j$
\end{algorithm}

The previous approach can be extended to compute a subset of vertices for energy storages $\mathcal{B}(S_{0}, S_{f}, p)$, which may violate Assumption~\ref{assumption2} for arbitrary $S_{f} > \underline{S}$, cf. Fig.~\ref{fig:Exmaple_Battery}.
\begin{figure}[tb]
	    \centering
	    \includegraphics[width=\columnwidth]{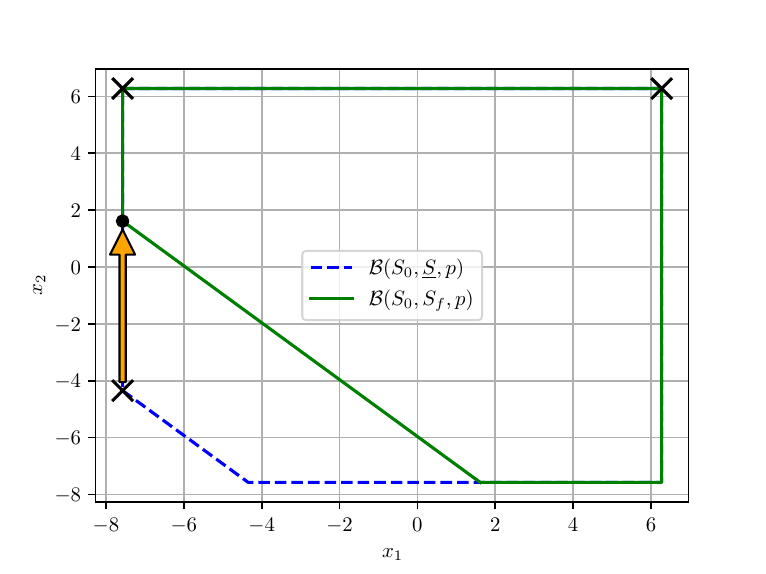}
	    \caption{The set $\mathcal{B}(S_0, S_f, p)$ in solid green and the set $\mathcal{B}(S_0, \underline{S}, p)$ in dashed blue. The crosses on $\mathcal{B}(S_0, \underline{S}, p)$ indicate the $y^j$, and the arrow with dot visualizes the correction process.} \label{fig:Exmaple_Battery}
	\end{figure}
Suppose $y^{j}$ satisfying Definition~\ref{def:y} is obtained for the energy storage $\mathcal{B}(S_{0}, \underline{S}, p)$.
The set $\{ x \in \mathbb{R} : (y^j_{[\TP-1]}, x)^\top \in \mathcal{B}(S_{0}, S_{f}, p) \}$ is equivalent to $\{x \in \mathbb{R} : \underline{x} \leq x \leq \overline{x},\ S_{f} \leq \alpha^\TP S_{0}+\sum_{\tau = 1}^{\TP-1}\alpha^{\TP-\tau} y_{\tau}^{j}\Delta t + x\Delta t \leq \overline{S}\}$. Hence, if $y^j \notin \mathcal{B}(S_{0}, S_{f}, p)$, then $S_{f} > \alpha^\TP S_{0} + \sum_{\tau = 1}^{\TP} \alpha^{\TP-\tau}y_{\tau}^{j}\Delta t$ since $y^j \in \mathcal{B}(S_{0}, \underline{S}, p)$ and the remaining inequalities are identical for both sets.
Thus, increasing the values in $y^j$ without violating the power constraints so that the inequality associated with $S_f$ is satisfied yields $y^j \in \mathcal{B}(S_{0}, S_f, p)$.
The polytope $\mathcal{B}(S_{0}, S_f, p)$ models an energy storage with a minimum final energy constraint.
The charging and discharging in $\mathcal{B}(S_{0}, \underline{S}, p)$ may result in a final energy less than $S_f$.
Thus, by correcting---i.e., increasing the values in $y^j$---one can achieve the given final energy $S_f$, and the corrected vectors $\Tilde{y}^j$ fulfill $\Tilde{y}^j \in \mathcal{B}(S_0,S_f,p)$, cf. Lemma~\ref{lemma:convergence}.
The correction process starts at the last period $\TP$ by checking in Line 2 of Algorithm~\ref{algo:positive correction batt} whether $S_f$ can be reached without violating the power constraints.
If this is possible, the $\TP$-th coordinate of $y^j$ is changed in Line 3 and the algorithm terminates, otherwise, the coordinate $\TP-1$ is changed to the highest possible value $\overline{x}$ and it is checked again whether the final SoC can be reached without violating the power constraints.
If possible, the $\TP$-th coordinate of $y^j$ is changed in Line 9 and the algorithm terminates, otherwise, it is continued with the coordinate $\TP-2$ and so forth until $S_f$ is reached.
This correction process is visualized in Fig.~\ref{fig:Exmaple_Battery}. First, the $y^j$ are computed within $\mathcal{B}(S_{0}, \underline{S}, p)$, i.e., the four crosses in Fig. \ref{fig:Exmaple_Battery}, then the coordinates of the crosses not contained in $\mathcal{B}(S_{0}, S_f, p)$ are increased, so that $S_f$ is reached, indicated by the arrow and dot in Fig. \ref{fig:Exmaple_Battery}.
\begin{algorithm}[tb]
	\caption{(Correction)}\label{algo:positive correction batt}
    \textbf{Input} $S_{0}, S_{f}, p, y^j \in \mathcal{B}(S_{0}, \underline{S}, p)$
	\begin{algorithmic}[1]
        \If{$S_{f} > \alpha^\TP S_{0} + \sum_{\tau = 1}^{\TP} \alpha^{\TP-\tau}y_{\tau}^{j}\Delta t$}
        \If{$\underline{x} \leq \frac{S_{f} - (\alpha^\TP S_{0} + \sum_{\tau = 1}^{\TP-1}\alpha^{\TP-\tau} y^j_{ \tau}\Delta t)}{\Delta t} \leq \overline{x}$}
        \State $y_{\TP}^{j} \gets
        \frac{S_{f} - (\alpha^\TP S_{0} + \sum_{\tau = 1}^{\TP-1} \alpha^{\TP-\tau}y^j_{ \tau}\Delta t)}{\Delta t}$
        \EndIf
        \State $t \gets \TP - 1$
        \While{$S_{f} \neq \alpha^\TP S_{0} + \sum_{\tau = 1}^{\TP}\alpha^{\TP-\tau} y^j_{ \tau}\Delta t$ and $t > 0$}
        \State $y_{t}^{j} \gets \overline{x}$
        \If{$\underline{x} \leq \frac{S_{f} - (\alpha^\TP S_{0} + \sum_{\tau = 1}^{\TP-1}\alpha^{\TP-\tau} y^j_{ \tau}\Delta t)}{\Delta t} \leq \overline{x}$}
        \State $y_{\TP}^{j} \gets \frac{S_{f} - (\alpha^\TP S_{0} + \sum_{\tau = 1}^{\TP-1} \alpha^{\TP-\tau}y^j_{ \tau}\Delta t)}{\Delta t}$
        \EndIf
        \State $t \gets t - 1$
        \EndWhile
        \EndIf
        \State $\Tilde{y}^j \gets y^j$
    \end{algorithmic}
    \textbf{Output} $\Tilde{y}^j$
\end{algorithm}
\begin{lemma}\label{lemma:convergence}
    Let $\mathcal{B}(S_{0}, \underline{S}, p)$ with parameter vector $p = (\alpha, \underline{x}, \overline{x}, \underline{S}, \overline{S}, \Delta t)^\top$ fulfill Assumptions \ref{assumption} and \ref{assumption2}. Further, let $y^j \in \mathbb{R}^\TP$, $j \in \{-1, 1\}^\TP$ satisfy Definition~\ref{def:y} for $\mathcal{B}(S_{0}, \underline{S}, p)$. Then, $\Tilde{y}^j$ defined by Algorithm~\ref{algo:positive correction batt} is in $\mathcal{B}(S_{0}, S_{f}, p)$ if $\mathcal{B}(S_{0}, S_{f}, p) \neq \emptyset$.
\end{lemma}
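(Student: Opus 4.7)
The approach is to verify that $\tilde y^j \in \mathcal{B}(S_0, S_f, p)$ by checking each defining inequality of the polytope, splitting on whether the outer guard on Line~1 of Algorithm~\ref{algo:positive correction batt} fires.

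If that guard fails, the algorithm returns $y^j$ unchanged. Since $y^j \in \mathcal{B}(S_0, \underline S, p)$, the power bounds, the intermediate SoC bounds, and the upper final-SoC bound are all inherited, while the failing guard is exactly the remaining inequality $S(\TP) \geq S_f$. Hence $\tilde y^j \in \mathcal{B}(S_0, S_f, p)$ in this case with no further work.

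In the correction case, I would first isolate two structural invariants of the loop. Every overwrite of a coordinate of $y^j$ is either $y_t^j \gets \overline x$ on Line~7 or a value on Lines~3/9 whose membership in $[\underline x, \overline x]$ is explicitly checked, so the power bounds $\underline x \leq \tilde y_t^j \leq \overline x$ hold throughout. Moreover, each overwrite is non-decreasing in the affected coordinate (the original satisfies $y_t^j \leq \overline x$, and Lines~3/9 only fire to raise $S(\TP)$ from below $S_f$), so the corrected SoC trajectory dominates that of $y^j$ componentwise; the lower SoC bound $\tilde S(\tau) \geq \underline S$ is therefore inherited from $y^j \in \mathcal{B}(S_0, \underline S, p)$. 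Termination via Line~9 then follows from a monotonicity argument: the value of $y_\TP^j$ required to hit $S_f$ that is tested on Line~8 decreases strictly with each iteration, since each Line~7 assignment strictly increases the prefix sum $\sum_{\tau=1}^{\TP-1}\alpha^{\TP-\tau} y_\tau^j \Delta t$. Combining this with a witness $z \in \mathcal{B}(S_0, S_f, p)$ supplied by the non-emptiness hypothesis (which bounds the achievable final charge and certifies that the required $y_\TP^j$ enters $[\underline x, \overline x]$ within at most $\TP-1$ iterations), Line~9 eventually fires and the loop exits with $\tilde S(\TP) = S_f \leq \overline S$.

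The main obstacle, which would need the most care, is the intermediate upper SoC bound $\tilde S(\tau) \leq \overline S$ for $\tau \in \{1,\ldots,\TP-1\}$, since Line~7 does not check it explicitly. For $\tau$ preceding the first overwritten index $t^*$ the bound is inherited from $y^j$. For $\tau \in \{t^*,\ldots,\TP-1\}$ I would argue by contradiction using the backward-greedy structure of the correction together with the final identity $\tilde S(\TP) = S_f \leq \overline S$ and the recursion $\tilde S(\tau) = \alpha \tilde S(\tau-1) + \tilde y_\tau^j \Delta t$: an intermediate violation $\tilde S(\tau) > \overline S$ combined with the non-emptiness of $\mathcal{B}(S_0, S_f, p)$ would permit a strictly smaller correction (one stopping at a later index than $t^*$) that still reaches $S_f$, contradicting the fact that the Line~8 test failed at every iteration before $t^*$. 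This step is the crux of the proof and is where the non-emptiness assumption on $\mathcal{B}(S_0, S_f, p)$ is used most essentially.
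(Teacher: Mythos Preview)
Your decomposition is sound and in fact more careful than the paper's: you correctly isolate the intermediate upper SoC bound $\tilde S(\tau)\le\overline S$ as the one constraint that Algorithm~\ref{algo:positive correction batt} never checks and that therefore needs a separate argument. The paper's proof does not address this point at all; it focuses exclusively on showing that the Line~8 test eventually succeeds (via an argument close in spirit to your monotonicity-plus-witness reasoning) so that the loop terminates with $\tilde S(\TP)=S_f$, and then asserts $\tilde y^j\in\mathcal{B}(S_0,S_f,p)$ without verifying the remaining inequalities.

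However, your contradiction sketch for the upper SoC bound does not go through, and the gap appears to be genuine rather than merely a missing detail. Take $d=4$, $\alpha=1$, $\Delta t=1$, $(\underline x,\overline x)=(-10,2)$, $(\underline S,\overline S)=(0,10)$, $S_0=5$, $S_f=9$, and $j=(1,-1,-1,-1)$. Then $\mathcal{B}(S_0,\underline S,p)$ satisfies both assumptions, $y^j=(2,-7,0,0)$, and $\mathcal{B}(S_0,S_f,p)\neq\emptyset$ since $(2,2,0,0)$ is feasible. Running Algorithm~\ref{algo:positive correction batt} yields $\tilde y^j=(2,2,2,-2)$, for which $\tilde S(3)=11>\overline S$, so $\tilde y^j\notin\mathcal{B}(S_0,S_f,p)$. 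Your proposed contradiction---that an intermediate violation would allow the correction to stop at a later index $t^*$---fails here: stopping at $t^*=3$ leaves the prefix sum at $2-7+2=-3$, so the Line~8 value is $9-(5-3)=7>\overline x$ and the test still fails. Thus neither your argument nor the paper's closes this case; the lemma as stated seems to need an additional hypothesis relating $\overline x$, $\overline S$, and $\alpha$ that holds in the benchmark parameter ranges of Section~\ref{sec:5} but is not implied by Assumptions~\ref{assumption} and~\ref{assumption2} alone.
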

For the proof see Appendix\ref{sec:AppendixE}. The next result shows that the corrected vector $\Tilde{y}^j$ is a vertex of $\mathcal{B}(S_{0}, S_{f}, p)$. The proof is given in Appendix\ref{sec:AppendixF}.
\begin{theorem}\label{VertexNonAssumptionBatteries}
    Let $\mathcal{B}(S_{0}, \underline{S}, p) \subset \mathbb{R}^\TP$ with parameter vector $p = (\alpha, \underline{x}, \overline{x}, \underline{S}, \overline{S}, \Delta t)^\top$ fulfill Assumptions \ref{assumption} and \ref{assumption2}, and $\mathcal{B}(S_{0}, S_f, p)$ be nonempty. Further, let $y^j \in \mathbb{R}^\TP$, $j \in \{-1, 1\}^\TP$ satisfy Definition~\ref{def:y} for $\mathcal{B}(S_{0}, \underline{S}, p)$. Then, $\Tilde{y}^j$ defined by Algorithm~\ref{algo:positive correction batt} is a vertex of $\mathcal{B}(S_{0}, S_{f}, p)$.
\end{theorem}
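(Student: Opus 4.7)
The approach is to split into two cases according to whether Algorithm~\ref{algo:positive correction batt} actually modifies $y^j$. Feasibility of $\Tilde{y}^j$ in $\mathcal{B}(S_{0}, S_f, p)$ is already provided by Lemma~\ref{lemma:convergence}, so only the vertex certificate remains to be constructed.

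In the first case, the condition in Line~1 of Algorithm~\ref{algo:positive correction batt} is false, and $\Tilde{y}^j = y^j$. Applying Theorem~\ref{theorem:Vertices of M-sum} with $\NF = 1$ to the single polytope $\mathcal{B}(S_{0}, \underline{S}, p)$ shows that $y^j$ is already a vertex of $\mathcal{B}(S_{0}, \underline{S}, p)$. Since $\mathcal{B}(S_{0}, S_f, p) \subseteq \mathcal{B}(S_{0}, \underline{S}, p)$ and $\Tilde{y}^j$ lies in the smaller polytope, a standard sub-polytope argument (any proper convex combination within $\mathcal{B}(S_{0}, S_f, p)$ is also one within $\mathcal{B}(S_{0}, \underline{S}, p)$) yields that $\Tilde{y}^j$ is a vertex of $\mathcal{B}(S_{0}, S_f, p)$.

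In the second case, I would trace Algorithm~\ref{algo:positive correction batt} to characterize $\Tilde{y}^j$ explicitly: there exists an index $t^{\star} \in \{1, \ldots, \TP\}$ such that $\Tilde{y}^j_t = y^j_t$ for $t < t^{\star}$, $\Tilde{y}^j_t = \overline{x}$ for $t \in \{t^{\star}, \ldots, \TP-1\}$ (an empty range when $t^{\star} = \TP$), and $\Tilde{y}^j_\TP$ is chosen precisely so that $\alpha^\TP S_0 + \sum_{\tau=1}^{\TP} \alpha^{\TP-\tau}\Tilde{y}^j_\tau \Delta t = S_f$. I then exhibit one active constraint of $\mathcal{B}(S_{0}, S_f, p)$ per coordinate: for $t < t^{\star}$, Algorithm~\ref{algo:batt} has already driven either a power bound ($\Tilde{y}^j_t \in \{\underline{x}, \overline{x}\}$) or an intermediate SoC bound ($S(t) \in \{\underline{S}, \overline{S}\}$) to equality at $y^j$, and both remain tight at $\Tilde{y}^j$ because the first $t^{\star}-1$ coordinates are untouched by the correction; for $t^{\star} \leq t \leq \TP-1$, the bound $x_t \leq \overline{x}$ is active; and for $t = \TP$, the terminal inequality $S_f \leq S(\TP)$ is active by construction.

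Linear independence of these $\TP$ gradients follows from a triangularity argument: a power bound at time $t$ contributes the row $\pm e_t^\top$, whereas an SoC bound (intermediate or terminal) at time $t$ contributes a row supported in $\{1, \ldots, t\}$ with value $\pm\Delta t$ at position $t$. Ordering the selected rows by $t = 1, \ldots, \TP$ produces a lower-triangular $\TP \times \TP$ matrix with nonzero diagonal, hence full rank, certifying $\Tilde{y}^j$ as a vertex of $\mathcal{B}(S_{0}, S_f, p)$. The main obstacle is the bookkeeping needed to unfold Algorithm~\ref{algo:positive correction batt}, define $t^{\star}$, and verify that each chosen inequality is simultaneously present in $\mathcal{B}(S_{0}, S_f, p)$ and tight at $\Tilde{y}^j$; once this is settled, the triangular structure delivers the required linear independence with minimal additional effort.
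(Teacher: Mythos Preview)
Your proposal is correct, and Case~1 is handled exactly as in the paper. For Case~2, however, you take a genuinely different route. The paper argues by contradiction in the same spirit as Proposition~\ref{lemma2}: assuming $\Tilde{y}^j = tp + (1-t)q$ with $p \neq q$ in $\mathcal{B}(S_0,S_f,p)$, it lets $m$ be the first index where $p$ and $q$ differ and compares $m$ with the correction index. If $m$ precedes the correction, greedy extremality of $y^j$ forces one of $p,q$ out of $\mathcal{B}(S_0,\underline{S},p)$; if $m$ lies in the corrected block, the bound $x_m \leq \overline{x}$ is violated; if $m = \TP$, the terminal inequality $S_f \leq S(\TP)$ is violated. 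You instead produce a direct basic-feasible-solution certificate: one active constraint per coordinate, assembled into a lower-triangular gradient matrix. Your argument is more constructive and avoids the index case split $m$ versus $t^{\star}$, at the price of having to verify that each selected constraint is actually present in $\mathcal{B}(S_0,S_f,p)$ (which is fine since $t < t^{\star} \leq \TP$ keeps you away from the modified terminal row). The paper's argument, on the other hand, stays methodologically uniform with the convex-combination machinery used in Lemmas~\ref{lemma:properties}--\ref{lemma_not_in_M_sum} and Propositions~\ref{lemma1}--\ref{lemma2}, reusing Theorem~\ref{theorem:Vertices of M-sum} rather than unpacking Algorithm~\ref{algo:batt} line by line.
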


Algorithm \ref{algo:batt_complete} uses the polytopes $\mathcal{B}(S_{0, i}, \underline{S}, p_i), i = 1, \ldots, \NF$ to compute a subset of their vertices, and then corrects these $y_i^j$ with Algorithm~\ref{algo:positive correction batt} such that vertices of polytopes $\mathcal{B}(S_{0, i}, S_{f, i}, p_i), i = 1, \ldots, \NF$ are obtained.
The parameter $g$ in Algorithm \ref{algo:batt_complete} allows considering a subset of $\{v^j: j \in \{-1, 1\}^\TP\}$. This allows adjusting accuracy and computational complexity. 
Line 1 guarantees that $g$ is limited to $2^\TP$ and ensures that all $2^d$ vectors for up to $d=8$ dimensions are included. 
If $g < 2^\TP$ and $d > 8$, we propose to stochastically select the $j \in \{1,-1\}^d$ using a uniform distribution.
In Line~7, the vector of zeros modeling the non-use of flexibility is inserted into the $g+1$-th column of $V$.
This is not necessary when $g = 2^\TP$ because then all vectors are computed, and it can be shown that in this case $\mathbf{0}_\TP \in \mathcal{A}$.
Note that the vector is to be appended only if $\alpha_i^\TP S_{0, i} \geq S_{f, i}\ \forall i \in \{1, \ldots, \NF\}$.
Otherwise, $\mathbf{0}_\TP$ is not included in $\mathcal{B}(S_{0, i}, S_{f, i}, p_i)$.
In Lines 15 and 16, the Algorithms \ref{algo:batt} and \ref{algo:positive correction batt} are invoked, respectively. 
Finally, it should be noted that the for loop in Line 11 only needs to be executed once for energy storage devices with identical parameters, as the inner for loop (Line 14) would result in the same matrix $V_i, \forall i \in \{1, \ldots, n\}$. 
In this case, $V_i$ can be calculated once and multiplied by the number of devices $n$.
On the supplementary website \cite{b38} we provide the complete Python code together with examples and illustrations.

\begin{algorithm}[tb]
	\caption{(Complete Algorithm)}\label{algo:batt_complete}
    \textbf{Input} $S_{0, i}, S_{f, i}, p_{i}, i=1, \ldots, \NF$, $g$
	\begin{algorithmic}[1]
        \If{$g < 2^\TP$ and $d > 8$}
            \State init $\mathcal{J}$ \Comment{generate $g$ distinct elements in $\{-1, 1\}^\TP$}
        \Else
            \State $\mathcal{J} \gets \{-1, 1\}^\TP$
        \EndIf
        \If{$\alpha_i^\TP S_{0, i} \geq S_{f, i} \ \forall i \in \{1, \ldots, \NF\}$}
        	\State $V \gets \mathbf{0}_{\TP \times (g+1)}$ \Comment{$\TP \times (g+1)$ matrix of zeros}
        \Else
        	\State $V \gets \mathbf{0}_{\TP \times g}$ \Comment{$\TP \times g$ matrix of zeros}
        \EndIf
        \For{$i = 1$ \textbf{to} $\NF$}
            \State $V_i \gets \mathbf{0}_{\TP \times g}$
            \State $k \gets 1$
            \For{$j \in \mathcal{J}$}
                \State $y^j_i \gets \text{Vertex}(S_{0,i}, p_i, j)$
                \State $\Tilde{y}^j_i \gets \text{Correction}(S_{0,i}, S_{f, i}, p_i, y^j_i)$
                \State $V_i[:, k] \gets \Tilde{y}^j_i$
                \State $k \gets k + 1$
            \EndFor
        \EndFor
        \State $V[:, 1:g] \gets \sum_{i=1}^\NF V_i$
    \end{algorithmic}
    \textbf{Output} $V$
\end{algorithm}

\section{Benchmark Results}\label{sec:5}
Now we compare the proposed method using the benchmark for Minkowski sum approximations previously published in \cite{b17}. 
The considered scenario models households with real demand curves and stationary batteries modeled by the polytopes $\mathcal{B}(S_{0, i}, \frac{1}{2}S_{0, i}, p_i), p_i = (1, \overline{x}_i, \underline{x}_i, 0, \overline{S}_i, \frac{1}{4})$.
The battery parameters are sampled from intervals: $\overline{S}_i \in [10.5, 13.5]$ (KWh), $S_{0,i} \in [0, 10.5]$ (kWh), $\overline{x}_i \in [4, 6]$ (kW), and $\underline{x}_i \in [-6, -4]$ (kW) $\forall i \in \{1, \ldots, \NF\}$, cf. \cite{b17} for details and for an indepth discussion of the benchmark. 
With these parameters, typically at least 60\% of the polytopes generated violate Assumption~\ref{assumption2}.

We assess the quality of inner approximations via the Unused Potential Ratio (UPR) defined as
\begin{equation}\label{eq:UPR}
    \text{UPR} \coloneqq \frac{z_\text{approx} - z_\text{exact}}{z_\text{no flex} - z_\text{exact}}\cdot 100.
\end{equation}
Here $z_\text{approx}$ represents the solution of an optimization problem, e.g., the minimal cost or peak power, based on the approximation, $z_\text{exact}$ the solution to the same problem without aggregation using all constraints at once, i.e., the exact feasible region, and $z_\text{no flex}$ the solution in a setting without flexibility. 
If the UPR is close to $0\ \%$, then the approximation and the Minkowski sum yield almost the same result.
Otherwise, if the UPR is close to $100\ \%$, then there is a large (unused) improvement available in the approximation. 
There is also the possibility that the UPR value is greater than $100\ \%$, in which case the solution without flexibility, i.e., $\mathbf{0}_\TP$, gives better results than using the approximation.
We consider the objectives $c^{\top}\left(x + \sum_{i=1}^{N}q_{\text{i}}\right)\Delta t$
for economic cost and $\left\lVert x + \sum_{i=1}^{N}q_{\text{i}}\right\rVert_{\infty}$ for peak power, where $c$ is the associated cost and $q_i$ the household demand. 
To account for uncertainties, UPR values are calculated for each month of a year along with $5$ random villages, i.e., sets of households with stationary batteries, the median of which is used for further analysis.

We use Algorithm \ref{algo:batt_complete} and a uniformly sampled subset $\mathcal{J} \subseteq \{-1, 1\}^\TP$ with $\TP^2$ distinct vectors to calculate the $y^j$, i.e., $g = \TP^2$.
Note that $g$ needs to be a function of $\TP$, as the number of vertices increases with increasing dimension, e.g., a hypercube has $2^\TP$ vertices in $\TP$ dimensions.
To motivate the quadratic dependence, we conducted an experiment with $100$ batteries for $12, 24, \ldots 96$ time periods.
For each tuple $(\NF,\TP) \in \{100\} \times \{12, 24, \ldots, 96\}$, the approximation and UPR values are calculated 50 times to measure the variation in UPR values with different choices of $\mathcal{J} \subseteq \{1,-1\}^d$. The results are shown in Fig. \ref{fig:Results_Distribution_boxplot}. 
The maximum range of the UPR values for the peak power and cost objectives is $6.3 \ \%$ and $14.9 \ \%$, respectively, indicating a good degree of robustness with different choices of $\mathcal{J} \subseteq \{1,-1\}^d$. 
Spontaneous fluctuations in the UPR values of Fig. \ref{fig:Results_Distribution_boxplot} are most likely induced by the random nature of the battery parameters, as the parameters determine the shape of the polytopes and thus the quality of the approximation. 
Since Algorithm \ref{algo:batt_complete} computes $\TP^2$ vertices for  each of the $\NF$ devices, the total number of vectors to compute is $\NF\TP^2$. 
Thus, the computational complexity is quadratic in the number of time periods and linear in the number of devices.
\begin{figure}[tb]
    \centering
    \includegraphics[width=1.1\columnwidth]{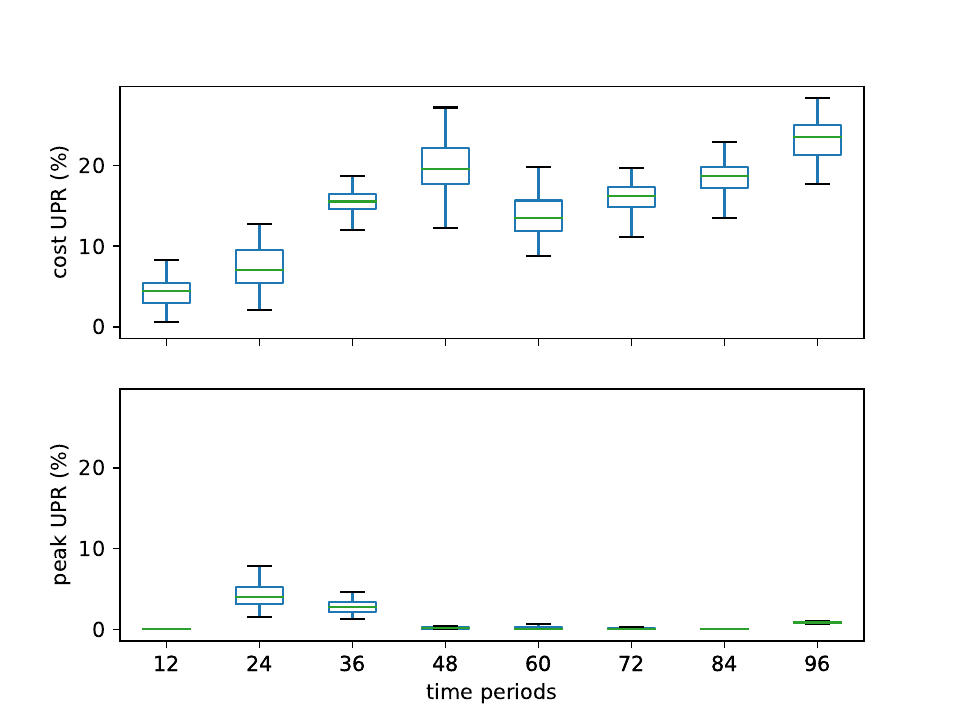}
    \caption{Boxplot for UPR values with 100 batteries, $d=12, 14, \ldots, 96$ time periods, and $g=d^2$. For each time period, the approximation is calculated 50 times.}
    \label{fig:Results_Distribution_boxplot}
\end{figure}
\begin{figure*}[tb]
    \centering
    \includegraphics[width=0.9\textwidth,height=0.45\textheight]{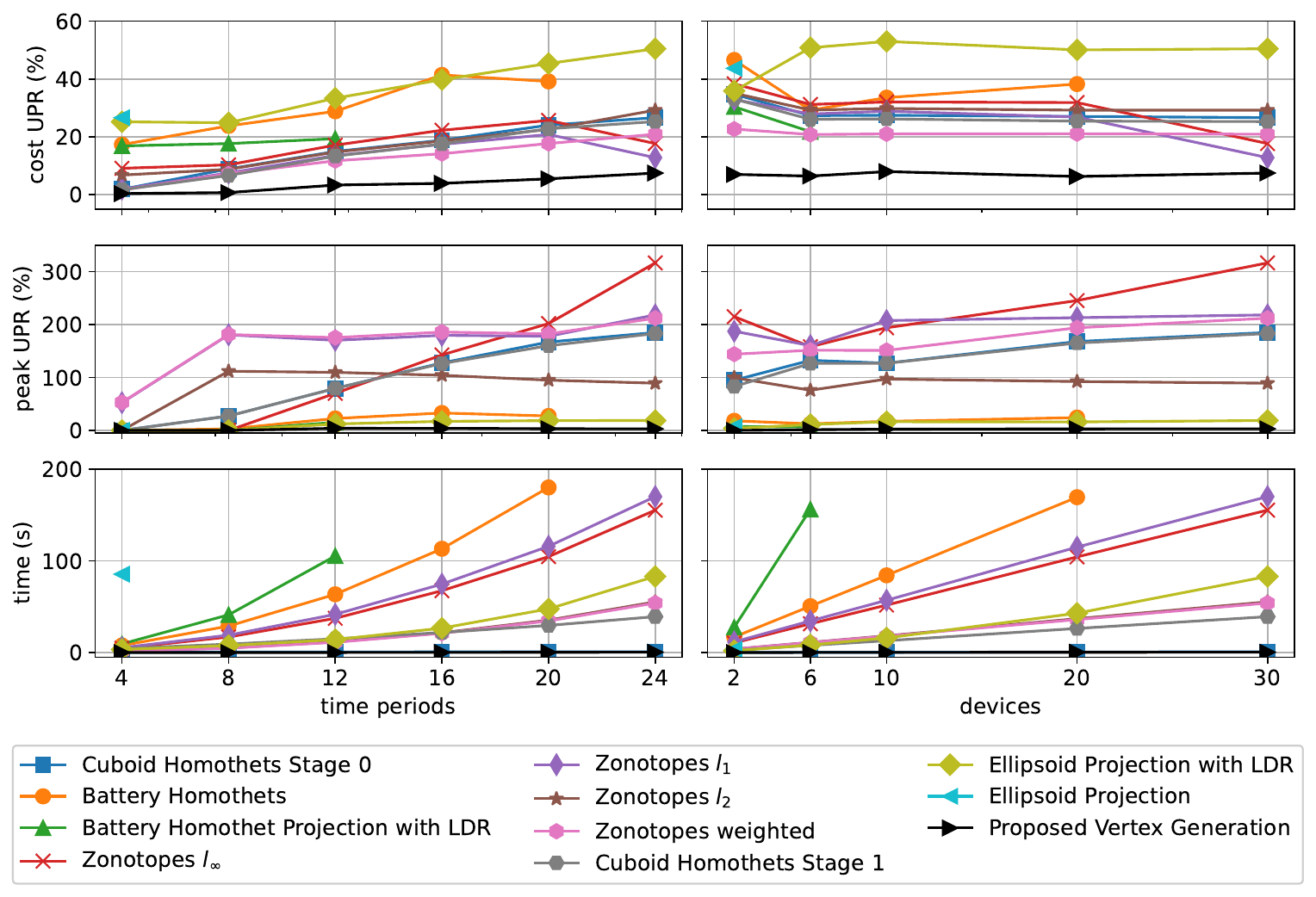}
    \caption{Results for experiments with tuples $(n,d) \in \{30\} \times \{4, 8, 12, 16, 24\}$ in the first column and experiments with tuples $(n,d) \in \{2, 6, 10, 20, 30\} \times \{24\}$ in the second column. The cost UPR values are shown in the first row, the peak UPR values in the second row and the calculation times in the third row.}
    \label{fig:res_algos}
\end{figure*}
\begin{table}[tb]
    \caption{Max UPR values and max calculation time for 4, 8, 12, 16, 20, 24 time periods and 2, 6, 10, 20, 30 batteries for different inner approximation methods}
    \setlength{\tabcolsep}{3pt}
    \centering
    \begin{tabular}{|c|c|c|c|c|c|}
    \hline
        Algorithm & Ref. & Time (s) & \multicolumn{2}{|c|}{UPR (\%)}\\
    \hline
         & & & Peak & Cost\\
    \hline
        Cuboid Homothets Stage 0 & \cite{b13} & 0.80 & 184.94 & 34.71\\
         Battery Homothets & \cite{b7} & - & - & - \\
        \begin{tabular}{@{}c@{}}Battery Homothet Projection \\ with LDR\end{tabular} & \cite{b15} & - & - & -\\
         Zonotopes $l_\infty$ & \cite{b11} & 155.29 & 316.41 & 38.31\\
         Zonotopes $l_1$ & \cite{b11} & 170.08 & 354.29 & 32.88\\
         Zonotopes $l_2$ & \cite{b11} & 55.23 & 168.10 & 35.04\\
         Zonotopes weighted & \cite{b12} & 53.97 & 228.23 & 22.66\\
         Cuboid Homothets Stage 1 & \cite{b13} & 39.11 & 183.01 & 33.04\\
         Ellipsoid Projection with LDR & \cite{b14} & 83.05 & 18.77 & 52.97\\
         Ellipsoid Projection & \cite{b5} & - & - & -\\
        \textbf{Proposed Vertex Generation} &  & \textbf{0.38} & \textbf{4.92} & \textbf{7.95}\\
    \hline
    \end{tabular}
    \label{tab1}
\end{table}
\begin{table}[tb]
    \caption{Maximum UPR values and max calculation time for 12, 24, $\ldots$, 96 time periods and 50, 100, $\ldots$, 500 batteries.}
    \setlength{\tabcolsep}{3pt}
    \centering
    \begin{tabular}{|c|c|c|c|c|}
    \hline
        Algorithm & Time (s) & \multicolumn{2}{|c|}{UPR (\%)}\\
    \hline
         &  & Peak & Cost\\
    \hline
        \textbf{Proposed Vertex Generation} & \textbf{361.76} & \textbf{7.37} & \textbf{33.93}\\
    \hline
    \end{tabular}
    \label{tab2}
\end{table}

To evaluate the 11 approximation algorithm, we conducted experiments with 2, 6, 10, 20 and 30 batteries and 4, 8, 12, 16, 20 and 24 time periods.
For each tuple $(\NF,\TP) \in \{2, 6, 10, 20, 30\} \times \{4, 8, 12, 16, 20, 24\}$, the UPR values and calculation times are computed similar to \cite{b17}.
The maximum computation time and the maximum UPR value for cost and peak objectives across all tuples are listed in the columns of Tab. \ref{tab1} for each algorithm. 
The reason for empty entries is a limitation in the benchmark, which skips algorithms that take longer than $10$ minutes for a tuple $(\NF, \TP)$ and are thus not calculated for further tuple combinations. 
In addition, Fig. \ref{fig:res_algos} shows the results for the tuples $(n,d) \in \{30\} \times \{4, 8, 12, 16, 20, 24\}$, i.e., fixed $30$ devices and varying time periods in the first column, and for the tuples $(n,d) \in \{2, 6, 10, 20, 30\} \times \{24\}$, i.e., fixed $24$ periods and varying devices in the second column. 
The cost UPR values are shown in Fig. \ref{fig:res_algos} in the first row, the peak UPR values in the second row, and the calculation times in the third row. 
Tab.~\ref{tab1} with Fig. \ref{fig:res_algos} shows that our proposed vertex generation achieves the lowest calculation times and UPR values. 
This solves one of the problems in \cite{b17}, namely the objective dependent performance, e.g., the algorithm "Ellipsoid Projection with LDR" achieves the second best results at Peak, but the worst results at Cost. 
Another problem identified in \cite{b17} is that the inner approximations may have worse performance than the setting without flexibility, in which case $\text{UPR} > 100 \ \%$.
This behavior is observable in the Peak column of Tab.~\ref{tab1} for all algorithms except for our approach as well as the algorithm "Ellipsoid Projection with LDR".
This problem is tackled in Line 7 of Algorithm~\ref{algo:batt_complete}, where the vector $\mathbf{0}_\TP$ is implicitly added in the $g+1$-th column of $V$.
The last problem mentioned in \cite{b17} is the computational complexity. Indeed, most algorithms listed in Tab.~\ref{tab1} have unrealistic runtimes already for intra-day time periods, cf. \cite{b17}. However, the results of Tab.~\ref{tab2} show that our approach enables efficient aggregations for full-day time periods and 500 batteries with a maximum computation time of about $6$ minutes. 
Furthermore, Tab.~\ref{tab2} shows that our method exhibits a maximum UPR value for the peak power objective of $7.37\ \%$, which occurs at $(400, 12)$ and is lower than that of the other algorithms, considering only $30$ batteries and $24$ time periods, cf. Tab.~\ref{tab1}.
The maximum cost UPR value is $33.93 \%$, which occurs at $(350, 96)$ and is again better than four of the other algorithms in settings with up to $30$ devices and $24$ time periods, cf. Tab.~\ref{tab1}. This could be further improved by considering more than $\TP^2$ vectors.

It is worth investigating the comparison between the proposed method and the centralized approach, i.e., without aggregation.
The solution to the latter problem is denoted as $z_\text{exact}$ in Eq. \eqref{eq:UPR}. 
In an experiment with 500 devices and 96 time periods, the centralized approach achieved computation times of 54.98~s for the cost objective and 96.58~s for the peak objective. 
Gurobi~\cite{b39} was used to solve the centralized optimization problems. 
While this approach performs better than the proposed method (cf. Table \ref{tab2}), it is worth noting that the centralized approach is impractical due to privacy issues, growing computational complexity, and excessive communication overhead. 
Finally, the proposed approach can be implemented on a simple microcontroller, i.e., no commercial solver is needed, and it can be further improved by computing each device in parallel, making the complexity independent of the number of devices. 
For the sake of completeness, it should be mentioned that distributed optimization methods also exist, e.g., in \cite{b37,b40}, which offer an alternative to aggregation and the centralized approach.

\section{Disaggregation for Vertex-based Approximations}\label{sec:6}
Next, we describe a novel disaggregation method for vertex-based approximations.
Once an estimate of the collective flexibility is available, a grid operator, for example, can select a power profile that needs to be distributed (disaggregated) by the aggregator to the individual flexible devices.
Mathematically, disaggregation is the inverse operation of aggregation. Aggregation can be described as a mapping $f: \mathcal{P}(A_1, b_1) \times \mathcal{P}(A_2, b_2) \times \cdots \times \mathcal{P}(A_\NF, b_\NF) \mapsto \mathbb{R}^\TP, f(x_1, \ldots, x_\NF) \coloneqq \sum_{i=1}^\NF x_i$ for polytopes $\mathcal{P}(A_i, b_i) \subset \mathbb{R}^\TP, i = 1, \ldots, \NF$.
However, this mapping is in general not injective and therefore not invertible, since there may be different sets of vectors with equal sum.
Usually, optimization problems are formulated and solved in the literature to obtain feasible vectors whose sum is the aggregated vector, cf. \cite{b12,b5}.
However, in high-dimensional spaces, this is time-consuming and, indeed, for vertex-based approximations it is not necessary.
Each aggregate vector $x$ can be described as a convex combination of its vertices, i.e.,
\begin{equation}\label{eq:disagg}
    x = \sum_{j \in \mathcal{J}}\alpha_jv^j, \sum_{j \in \mathcal{J}}\alpha_j = 1,  \alpha_j \geq 0 \ \forall j \in \mathcal{J}.
\end{equation}
Each $v^j$ in the aggregation is a summation of vertices $y^j_i \in \mathcal{P}(A_i, b_i)$, hence $v^j = \sum_{i=1}^\NF y^j_i$ with $y^j_i \in \mathcal{P}(A_i, b_i)$. Inserting this equality in \eqref{eq:disagg} yields:
\begin{align}
    & x = \sum_{j \in \mathcal{J}}\alpha_j\sum_{i=1}^\NF y^j_i = \sum_{i=1}^\NF\sum_{j \in \mathcal{J}}\alpha_jy^j_i
\end{align}
Thus, the contribution of flexibility $i$ is the sum $\sum_{j \in \mathcal{J}}\alpha_j y^j_i$. Note that the $\alpha_j$ are fixed by the chosen vector $x$, e.g., from a previously performed optimization by the grid operator, and the $y_i^j$ are known from Algorithm \ref{algo:batt}. 
Therefore, the disaggregation reduces to the calculation of the inner sum for each flexibility, cf. Algorithm~\ref{algo:Disaggregation}. 
Note that this calculation only needs to be carried out once if energy storages with identical parameters are considered. 
\begin{algorithm}[tb]
	\caption{(Disaggregation)}\label{algo:Disaggregation}
    \textbf{Input} $\alpha_j, y^j_i, i = 1, \ldots, \NF, \mathcal{J}$
	\begin{algorithmic}[1]
        \State $D \gets \mathbf{0}_{\TP \times \NF}$
        \For{$i = 1$ \textbf{to} $\NF$}
            \State $D[:,i] \gets \sum_{j \in \mathcal{J}}\alpha_jy^j_i$
        \EndFor
    \end{algorithmic}
    \textbf{Output} $D$
\end{algorithm}

\section{Conclusions}\label{sec:7}
This paper proposed a novel vertex-based inner approximation for the collective flexibility of multiple flexible devices.
Our method is applicable to polytopes satisfying two rather mild assumptions. 
For energy storages which violate Assumption 2 we provide an efficient adaption.
In a benchmark, the proposed approach outperforms ten state-of-the-art inner approximations from the literature in terms of computational complexity and in terms of accuracy for different objectives.
In addition, an efficient disaggregation method is proposed, which is applicable to any vertex-based approximation. 
In combination, the presented methods are to the best of the authors' knowledge the first
to provide a computationally efficient mean to (dis-)aggregate typical energy storages in quarter-hourly periods over an entire day with reasonable accuracy for aggregate cost \textit{and} peak power optimization objectives. 

The proposed method is applicable to a class of practically relevant polytopes. 
In future work, we want to extend our approach to non-polytopic sets like energy storages with \mbox{(dis-)}charging efficiencies and restrictions to simultaneous \mbox{(dis-)}charging.
Also, for other practically relevant polytopes like energy storages with limited availability or time-dependent energy constraints, adaptations to the algorithms need to be developed.
In addition, the proposed method should be extended to consider active and reactive power and to facilitate robust aggregation and disaggregation in the presence of uncertainty.
Finally, to increase the optimization performance at the aggregated level, the optimal choice for the vertices subset $\mathcal{J} \subseteq \{1,-1\}^d$ has to be investigated.

\appendices
\subsection{Proof of Lemma 1}\label{sec:appendixA}
\begin{proof}(Property 1)
	For $t = 2, \ldots, \TP$ we have $v^j_t = \sum_{i=1}^{\NF} j_t \cdot \max \{j_t \cdot x \in \mathbb{R} : (y^j_{i, [t-1]}, x, \mathbf{0}_{\TP-t})^\top \in \mathcal{P}(A_i, b_i) \}$. By construction $y^j_i \in \mathcal{P}(A_i, b_i)$, and by Assumption~\ref{assumption2} $\textnormal{Proj}^{t-1}(y_i^j) \in \mathcal{P}(A_i, b_i)$, hence $(y^j_{i, [t-1]}, 0, \mathbf{0}_{\TP-t})^\top \in \mathcal{P}(A_i, b_i)$. Therefore, if $j_t = 1$, then $\max \{x \in \mathbb{R} : (y^j_{i, [t-1]}, x, \mathbf{0}_{\TP-t})^\top \in \mathcal{P}(A_i, b_i) \} \geq 0 \; \forall \ i$, hence $v^j_t \geq 0$. Otherwise, if $j_t = -1$, then $-\max \{-x \in \mathbb{R} : (y^j_{i, [t-1]}, x, \mathbf{0}_{\TP-t})^\top \in \mathcal{P}(A_i, b_i) \} \leq 0 \; \forall \ i$, thus $v^j_t\leq 0$.

    For $t = 1$ we have $v^j_1 = \sum_{i=1}^{\NF}
    j_1 \cdot \max \{j_1 \cdot x \in \mathbb{R} : (x, \mathbf{0}_{\TP-1})^\top \in \mathcal{P}(A_i, b_i) \}$. Assumption~\ref{assumption2} gives $\mathbf{0}_\TP \in \mathcal{P}(A_i, b_i)$, and by the same reasoning it follows that $v^j_1 \geq 0$ if $j_1 = 1$ and $v^j_1 \leq 0$ if $j_1 = -1$.
\end{proof}

\begin{proof}(Property 2)
	Suppose that $v^k_t = v^j_t$ for $k_t \neq j_t$. Without loss of generality let $j_t = 1$ and $k_t=-1$. Then, $v^j_t =  \sum_{i=1}^{\NF} y^j_t = \sum_{i=1}^{\NF} y^k_t = v^k_t$.
    Since $y^j_t \geq 0$ and $y^k_t \leq 0$ (Property 1), we have $y^j_t = y^k_t = 0 \; \forall \ i$.
    This is however impossible as by Assumption~\ref{assumption2} $\textnormal{Proj}^{t-1}(y_i^j) \in \mathcal{P}(A_i, b_i)$, which implies that $(y^j_{i, [t-1]}, 0, \mathbf{0}_{\TP-t})^\top \in \mathcal{P}(A_i, b_i)$. Furthermore, Assumption~\ref{assumption} ensures the existence of an $\varepsilon \in \mathbb{R}\setminus \{0\}$ with $(y^j_{i, [t-1]}, \varepsilon, \mathbf{0}_{\TP-t})^\top \in \mathcal{P}(A_i, b_i)$, which gives at least two elements, and therefore $\max \{x \in \mathbb{R} : (y^j_{i, [t-1]}, x, \mathbf{0}_{\TP-t})^\top \in \mathcal{P}(A_i, b_i) \} \neq -\max \{-x \in \mathbb{R} : (y^k_{i, [t-1]}, x, \mathbf{0}_{\TP-t})^\top \in \mathcal{P}(A_i, b_i) \}$.
    Thus, the assumption that $v^k_t = v^j_t$ must be false, which yields $v^k_t \neq v^j_t$.

    For $j \neq k$, there exists an index $t$ with $j_t \neq k_t$ and by the above reasoning holds $v_t^k \neq v_t^j$, hence $v^j \neq v^k$.
\end{proof}

\subsection{Proof of Lemma 2}\label{sec:AppendixB}
\begin{proof}
	Assume that $p\in \mathcal{M}$ and $t > 1$, then there are $p_i \in \mathcal{P}(A_i, b_i)$ with $p=\sum_{i=1}^{\NF}p_i$, and $p_{[t-1]} = \sum_{i=1}^{\NF}p_{i, {[t-1]}}=\sum_{i=1}^{\NF}y^j_{i, [t-1]} = v^j_{[t-1]}$. We distinguish two cases:

	\noindent \textit{Case 1}: Let $p_{i, [t-1]} = y^j_{i, [t-1]} \ \forall \ i$. If $p_{t} > v^j_t$ and $j_t = 1$, then there is a $k \in \{1, \ldots, \NF\}$ with $p_{k, t} > y^j_{k, t}$. Since $p_{k, [t-1]} = y^j_{k, [t-1]}$ and $y^j_{k, t} = \max \{ x \in \mathbb{R} : (y^j_{k, [t-1]}, x, \mathbf{0}_{\TP-t})^\top \in \mathcal{P}(A_k, b_k) \}$ it follows that $(p_{k,[t-1]}, p_{k, t},\mathbf{0}_{\TP-t})^\top \not \in \mathcal{P}(A_k, b_k)$, thus $\text{Proj}^t(p_{k}) \not \in \mathcal{P}(A_i, b_i)$. Assumption \ref{assumption2} yields $p_k \not\in \mathcal{P}(A_k, b_k)$, which contradicts the assumption $p \in \mathcal{M}$.
    If $p_t < v^j_t$ and $j_t = -1$, then by similar reasoning it follows that $p\not \in \mathcal{M}$.

	\noindent \textit{Case 2}: $\exists$ $l, k \in \{1, \ldots, \NF\}$ with $p_{l, [t-1]} \neq y^j_{l, [t-1]}$ and $p_{k, [t-1]} \neq y^j_{k, [t-1]}$. Note that the negation of Case 1 yields at least two indices $l, k \in \{1, \ldots, \NF\}$, and a minimum index $m \in \{1, \ldots, \TP\}$ with $p_{l, m} < y^j_{l, m}$ and $p_{k, m} > y^j_{k, m}$.
    For $m \neq 1$ we have $p_{l,[m-1]} = y^j_{l, [m-1]}$ and $y^j_{l, [m-1]} = p_{k, [m-1]}$. If $j_m = -1$ then $\text{Proj}^m(p_l) \not \in \mathcal{P}(A_l, b_l)$ and by Assumption~\ref{assumption2} $p_l \not \in \mathcal{P}(A_l, b_l)$.
    Otherwise, if $j_m = 1$, then $\text{Proj}^m(p_k) \not\in \mathcal{P}(A_k, b_k)$ and by Assumption \ref{assumption2} $p_k \not\in \mathcal{P}(A_k, b_k)$. For $m = 1$ we have that $p_{l, 1} < y^j_{l, 1}$ and $p_{k, 1} > y^j_{k, 1}$. If $j_1 = 1$, then $p_k \notin \mathcal{P}(A_k, b_k)$.
    Otherwise, if $j_1 = -1$, then $p_l \notin \mathcal{P}(A_l, b_l)$.

    For $t = 1$ holds that if  $p_1 > v^j_1$ and $j_1 = 1$, then there is an index $k \in \{1, \ldots, \NF\}$ with $p_{k, 1} > y^j_{k, 1}$ therefore $\textnormal{Proj}^1(p_{k}) \notin \mathcal{P}(A_k, b_k)$ and by Assumption \ref{assumption2} $p_k \notin \mathcal{P}(A_k, b_k)$.
    Otherwise, if $p_1 < v^j_1$ and $j_1 = -1$, then by similar reasoning $p_{k} \notin \mathcal{P}(A_k, b_k)$.
    Hence, all cases lead to contradictions and therefore $p \notin \mathcal{M}$.
\end{proof}

\subsection{Proof of Proposition 1}\label{sec:AppendixC}
\begin{proof}
	We prove the convex independence of the set of vectors $\{v^j : j \in \{-1, 1\}^\TP\}$ by induction over $\TP$. It then follows that $v^j$ is a vertex in $\mathcal{A}$.

	\noindent \textit{Base case}: ($\TP = 1$)
	In one-dimensional space two distinct numbers $v^{(0)}, v^{(1)}$ are computed cf. Lemma \ref{lemma:properties}, which are convex independent by definition.

	\noindent \textit{Induction hypothesis}:
	Let the set of vectors $\{v^j : j \in \{-1, 1\}^\TP\}$ be convex independent for a $\TP \in \mathbb{N}$.

	\noindent \textit{Induction step}: ($\TP \rightarrow \TP+1$) The $\TP+1$ dimensional vectors are constructed by $\{v^{(j, -1)}, v^{(j, 1)} : j \in \{-1, 1\}^\TP\}$. Assume the set of vectors $\{v^{(j, -1)}, v^{(j, 1)} : j \in \{-1, 1\}^\TP\}$ is convex dependent, then for some $k \in \{-1, 1\}^\TP$ we have without loss of generality that
	\begin{align}
		&v^{(k, -1)} = \sum_{j \in \{-1, 1\}^\TP, j \neq k} \alpha_{j}v^{(j, -1)} + \sum_{j \in  \{-1, 1\}^\TP}\beta_{j}v^{(j, 1)}\label{proof_conv_ind2}\\
		&\sum_{j \in \{-1, 1\}^\TP, j \neq k} \alpha_{j} + \sum_{j \{-1, 1\}^\TP}\beta_{j} = 1\\
		&\alpha_{j}, \beta_{j} \geq 0
	\end{align}
	Projecting \eqref{proof_conv_ind2} to the first $\TP$ coordinates gives:
	\begin{align*}
		&v^{k} = \beta_{k}v^k + \sum_{j \in \{-1, 1\}^\TP, j \neq k} (\alpha_{j} + \beta_{j})v^j\\
		&(1-\beta_{k})v^{k} = \sum_{j \in \{-1, 1\}^\TP, j \neq k} (\alpha_{j} + \beta_{j})v^j
	\end{align*}
	where $0 \leq \beta_{k} \leq 1$. We distinguish two cases:

	\noindent \textit{Case 1}: If $\beta_{k} < 1$, then $1- \beta_{k} > 0$ and we have:
	\begin{align*}
		v^{k} = \sum_{j \in \{-1, 1\}^\TP, j \neq k} \frac{(\alpha_{j} + \beta_{j})}{(1-\beta_{k})}v^j
	\end{align*}
	where $\frac{(\alpha_{j} + \beta_{j})}{(1-\beta_{k})} \geq 0$ and $\sum_{j \in \{-1, 1\}^\TP, j \neq k} \frac{(\alpha_{j} + \beta_{j})}{(1-\beta_{k})}=1$, hence $v^k$ is convex combination of vectors in $\{v^j : j \in \{-1, 1\}^\TP\} \setminus \{v^k\}$, which contradicts the induction hypothesis.

	\noindent \textit{Case 2}: If $\beta_{k} = 1$, then $\alpha_{j} = \beta_{j} = 0 \; \forall j \in \{-1, 1\}^\TP\setminus\{k\}$, and it follows from \eqref{proof_conv_ind2} that $v^{(k, -1)} = v^{(k, 1)}$, which is impossible as the vectors are distinct by Lemma \ref{lemma:properties}. 
    These contradictions show the convex independence of the vectors.

	\noindent Since $\mathcal{A} = \textnormal{Conv}(\{v^j : j \in \{-1, 1\}^\TP\})$, and $v^k$ for any $k \in \{-1,1\}^d$ is not a convex combination of vectors in $\{v^j : j \in \{-1, 1\}^\TP\} \setminus \{v^k\}$, it follows that $v^k$ is not a convex combination of vectors in $\mathcal{A} \setminus \{v^k\}$, which proofs that $v^k$ is a vertex of $\mathcal{A}$.
\end{proof}

\subsection{Proof of Proposition 2}\label{sec:AppnedixD}
\begin{proof}
	This statement is obvious if $\mathcal{M}\setminus \mathcal{A} = \emptyset$, i.e., $\mathcal{M}=\mathcal{A}$. Therefore, we temporarily suppose that $\mathcal{M}\setminus \mathcal{A} \neq \emptyset$ and we distinguish two cases:

	\noindent \textit{Case 1}: Let $p, q \in \mathcal{M} \setminus \mathcal{A}$. Assume that $v^j = tp+(1-t)q$ with $t \in (0, 1)$.
    Since $v^j\in \mathcal{A}$ and $p, q \not\in \mathcal{A}$, it follows that $p\neq v^j$ and $q \neq v^j$.
    Since $t \in (0, 1)$, it follows that $p \neq q$, hence there are indices in $\{1, \ldots, \TP\}$ where the entries in $p$ and $q$ are different.
    Let $m$ be the minimum of these indices.
    For this index holds $v^j_m = tq_m + (1- t)p_m$, $t \in (0, 1)$ and $p_t \neq q_t$. Without loss of generality, suppose that $p_m < q_m$, then $p_m < v^j_m < q_m$.
    Since $m$ is the minimum index, we have equality in $v, p$ and $q$ for the indices $\{1, \ldots, m-1\}$.
    If $j_m = 1$, then by Lemma \ref{lemma_not_in_M_sum} we have $q \not\in \mathcal{M}$. Otherwise, if $j_m = -1$, then  by Lemma \ref{lemma_not_in_M_sum} we see that $p \not\in \mathcal{M}$.
    Therefore we have a contradiction in both cases and the assumption must be false.
    Hence there are no $p, q \in \mathcal{M} \setminus \mathcal{A}$ with $v^j=tp + (1-t)q$ and $t \in (0, 1)$.

	\noindent \textit{Case 2:} Let $p \in \mathcal{M}\setminus \mathcal{A}$ and $q \in \mathcal{A}$.
    The proof for this case is almost a copy of the previous one. Assume that $v^j = tp + (1- t)q$ with $t$ $\in (0, 1)$.
    Since $p\not\in \mathcal{A}$, $q \in \mathcal{A}$ and $t \in (0, 1)$ it follows that $v^j\neq p$ and $p \neq q$.
    Since $p\neq q$, there is a minimum index $m$ where the components of $p$ and $q$ are different.
    For this index holds $v^j_m= tp_m+(1-t)q_m$ and $q_m \neq p_m$. Without loss of generality assume that $p_m < q_m$.
    Since $m$ is the minimum index, we have equality in the indices $\{1, \ldots, m-1\}$. If $j_m = 1$, then it follows by Lemma \ref{lemma_not_in_M_sum} that $q \not\in \mathcal{M}$ otherwise, if $j_m = -1$, then by the same reasoning it follows that $p\not\in \mathcal{M}$.
    This shows that there are no $p\in \mathcal{M} \setminus \mathcal{A}$ and $q\in \mathcal{A}$ with $v^j = tp + (1- t)q$ and $ t \in (0, 1)$.

	In conclusion, we see that the only possible case is $p, q\in \mathcal{A}$. This concludes the proof.
\end{proof}

\subsection{Proof of Lemma 3}\label{sec:AppendixE}
\begin{proof}
    If $y^j \in \mathcal{B}(S_0, S_f, p)$, then there is nothing to show. 
    Hence we assume that $y^j \notin \mathcal{B}(S_0, S_f, p)$. If the assignments in Lines 3 or 9 are applied, then
    \begin{align}\nonumber
        &\alpha^\TP S_0 + \sum_{\tau = 1}^\TP \alpha^{\TP-\tau}\Tilde{y}^j_{\tau}\Delta t = \alpha^\TP S_0 + \sum_{\tau = 1}^{\TP-1} \alpha^{\TP-\tau}\Tilde{y}^j_{\tau}\Delta t
        \\ & + \frac{S_{f} - (\alpha^\TP S_{0} + \sum_{\tau = 1}^{\TP-1} \alpha^{\TP-\tau}\Tilde{y}^j_{ \tau}\Delta t)}{\Delta t}\Delta t  = S_f.
    \end{align}
    Therefore, $\Tilde{y}^j \in \mathcal{B}(S_0, S_f, p)$ if there is a correction index $k \in \{1, \ldots, \TP\}$ such that one of the assignments are applied. If $k \in \{1, \ldots, \TP-1\}$ it holds that $y^j_{\tau} = \overline{x} \ \forall \tau \in \{k, \ldots, \TP-1\}$, and $k = \TP$ implies correction in Line 3 only. 
    
    Assume that $\frac{S_{f} - (\alpha^\TP S_{0} + \sum_{\tau = 1}^{\TP-1} \alpha^{\TP-\tau}\Tilde{y}^j_{\tau}\Delta t)}{\Delta t} > \overline{x}$ for all correction indices $k = 1, \ldots, \TP$, then also for $k=1$. We have $\overline{x} < \frac{S_{f} - (\alpha^\TP S_{0} + \sum_{\tau=1}^{\TP-1}\alpha^{\TP-\tau}\overline{x}\Delta t)}{\Delta t}$. Therefore, $S_{f} > \alpha^\TP S_{0} + \sum_{\tau = 1}^{\TP-1}\alpha^{\TP-\tau}\overline{x}\Delta t + \overline{x}\Delta t$. Since $\mathcal{B}(S_{0}, S_{f}, p) \neq \emptyset$, there exists an $x \in \mathcal{B}(S_{0}, S_{f}, p)$ with $S_{f} \leq \alpha^\TP S_{0} + \sum_{\tau = 1}^\TP\alpha^{\TP-\tau} x_{\tau}\Delta t$. This gives $\alpha^\TP S_{0} + \sum_{\tau =1}^{\TP}\alpha^{\TP-\tau}\overline{x}\Delta t < S_{f} \leq \alpha^\TP S_{0} + \sum_{\tau = 1}^\TP \alpha^{\TP-\tau}x_{\tau}\Delta t$ hence $\sum_{\tau=1}^\TP\alpha^{\TP-\tau}\overline{x} < \sum_{\tau = 1}^\TP\alpha^{\TP-\tau} x_{\tau}$, which implies that there is an index $m$ with $\overline{x} < x_m$ and therefore $x \notin \mathcal{B}(S_{0}, S_{f}, p)$, contradicting $x \in \mathcal{B}(S_{0}, S_{f}, p)$.
   
   From the above, we have that there are indices $k$ such that $\frac{S_{f} - (\alpha^\TP S_{0} + \sum_{\tau = 1}^{\TP-1}\alpha^{\TP-\tau} \Tilde{y}^j_{\tau}\Delta t)}{\Delta t} \leq  \overline{x}$. Hence we use the maximum correction index $l$ with $S_f \leq \alpha^\TP S_0 + \sum_{\tau = 1}^{l-1}\alpha^{\TP-\tau}\Tilde{y}^j_{\tau}\Delta t + \sum_{\tau=l}^{\TP} \alpha^{\TP-\tau}\overline{x}\Delta t$ and $S_f > \alpha^\TP S_0 + \sum_{\tau = 1}^{l}\alpha^{\TP-\tau} \Tilde{y}^j_{\tau}\Delta t + \sum_{\tau = l+1}^{\TP}\alpha^{\TP-\tau}\overline{x}\Delta t$.
    Note that this index exists since we assumed that $y^j \notin \mathcal{B}(S_0, S_f, p)$. Suppose that $\frac{S_{f} - (\alpha^\TP S_{0} + \sum_{\tau = 1}^{\TP-1} \alpha^{\TP-\tau}\Tilde{y}^j_{\tau}\Delta t)}{\Delta t} < \underline{x}$ for this index, then $S_f < \alpha^\TP S_0 + \sum_{\tau = 1}^{l-1}\alpha^{\TP-\tau} \Tilde{y}_{\tau}^j\Delta t+ \sum_{\tau=l}^{\TP-1}\alpha^{\TP-\tau}\overline{x}\Delta t + \underline{x}\Delta t$.
    This gives with $-S_f < -\alpha^\TP S_0 - \sum_{\tau = 1}^{l}\alpha^{\TP-\tau} \Tilde{y}^j_{\tau}\Delta t - \sum_{\tau = l+1}^{\TP}\alpha^{\TP-\tau}\overline{x}\Delta t$ that $0 < -\alpha^{\TP-l} \Tilde{y}^j_{l}\Delta t + \alpha^{\TP-l}\overline{x}\Delta t - \overline{x}\Delta t+ \underline{x}\Delta t$.
    Thus  $\alpha^{d-l}\Tilde{y}^j_l < (\alpha^{d-l}-1)\overline{x} + \underline{x} \leq \underline{x}$ and hence $\Tilde{y}^j_l < 0$.
    Since $\alpha^{\TP-l} \in (0, 1]$ we have that $\alpha^{\TP-l} \Tilde{y}_l^j \geq \Tilde{y}_l^j$. Therefore, $\Tilde{y}^j_{l} \leq \alpha^{\TP-l} \Tilde{y}_l^j < \underline{x}$, hence $\Tilde{y}^j_{l} < \underline{x}$, which is impossible as $y^j \in \mathcal{B}(S_0, \underline{S}, p)$ an all corrections were within $[ \underline{x}, \overline{x} ]$.
    We conclude that there exists an index such that the assignments in Line 3 or 9 are applied, and $\Tilde{y}^j \in \mathcal{B}(S_0, S_f, p)$.
\end{proof}

\subsection{Proof of Theorem 2}\label{sec:AppendixF}
\begin{proof}
    Assume that $\Tilde{y}^j$ is not a vertex of $\mathcal{B}(S_{0}, S_{f}, p)$, then there are $p, q \in \mathcal{B}(S_{0}, S_{f}, p)$ with $\Tilde{y}^j = pt + q(1-t)$, $p \neq q$ and $t \in (0, 1)$. We distinguish two cases.

    \noindent \textit{Case 1}: $\Tilde{y}^j = y^j$ and therefore $y^j$ is not changed by Algorithm~\ref{algo:positive correction batt}. Since $p, q \in \mathcal{B}(S_{0}, S_{f}, p)$, and $\mathcal{B}(S_{0}, S_{f}, p) \subseteq \mathcal{B}(S_{0}, \underline{S}, p)$ we have that $p,q \in \mathcal{B}(S_{0}, \underline{S}, p)$. Therefore we have $y^j = \Tilde{y}^j = pt + q(1-t)$, $p \neq q$ and $t \in (0, 1)$. Hence $y^j$ is not a vertex of $\mathcal{B}(S_{0}, \underline{S}, p)$, contradicting Theorem \ref{theorem:Vertices of M-sum}.

    \noindent \textit{Case 2}: $\Tilde{y}^j \neq y^j$ and therefore $y^j$ is changed by Algorithm~\ref{algo:positive correction batt}.
    Since $\Tilde{y}^j \neq y^j$, there exists a maximum correction index $f$ with $\Tilde{y}_{[f-1]}^j = y_{[f-1]}^j$ and $\Tilde{y}_{t}^j = \overline{x}$ $\forall t \in \{f, \ldots, d-1\}$.
    Since $p \neq q$, there is a minimum index $m$ with $p_{[m-1]} = p_{[m-1]}$ and $p_m \neq q_m$.
    Without loss of generality let $p_m > q_m$.
    If $f > m$, then $\Tilde{y}_{[m]}^j = y_{[m]}^j$, hence $p_{[m-1]} = y^j_{[m-1]} = q_{[m-1]}$ and $p_m > y_{m}^j > q_m$.
    From this it follows that $\textnormal{Proj}^m(p) \notin \mathcal{B}(S_{0}, \underline{S}, p)$ or $\textnormal{Proj}^m(q) \notin \mathcal{B}(S_{0}, \underline{S}, p)$. With Assumption~\ref{assumption2}  we have that $p \notin \mathcal{B}(S_{0}, \underline{S}, p)$ or $q \notin \mathcal{B}(S_{0}, \underline{S}, p)$, which contradicts $p, q \in \mathcal{B}(S_{0}, S_f, p)$.
    Hence, it holds that $f \leq m$. Moreover, $\Tilde{y}_t^j = \overline{x}$, $\forall t \in \{f, \ldots, d-1\}$ holds. For $m \neq d$, the inequality $\Tilde{y}_m^j < q_m$ implies that $q \notin \mathcal{B}(S_{0}, S_f, p)$.
    Hence $m = \TP$, which gives $q_{[\TP-1]} = p_{[\TP-1]} = \Tilde{y}^j_{[\TP-1]}$ and $p_\TP < \Tilde{y}^j_\TP < q_\TP$.
    This is, however, impossible as by Algorithm~\ref{algo:positive correction batt}  $\alpha^\TP S_{0, k} + \sum_{\tau = 1}^{\TP} \alpha^{\TP - \tau}\Tilde{y}^j_{k}\Delta t = S_{f, k}$. Using $p$ instead leads to $\alpha^\TP S_{0, k} + \sum_{\tau = 1}^{\TP} \alpha^{\TP - \tau}p_{k}\Delta t < S_{f, k}$.  Hence we conclude $p \notin \mathcal{B}(S_{0, k}, S_{f, k}, p_{k})$.
\end{proof}

\end{document}